\newtheorem{theorem}{Theorem}[section]
\newtheorem*{theorema}{Theorem}
\newtheorem{lemma}[theorem]{Lemma}
\theoremstyle{definition}
\newtheorem{definition}[theorem]{Definition}
\newtheorem{example}[theorem]{Example}
\newtheorem{corollary}[theorem]{Corollary}
\theoremstyle{remark}
\newtheorem{remark}[theorem]{Remark}
\theoremstyle{proposition}
\newtheorem{proposition}[theorem]{Proposition}
\theoremstyle{propositions}
\numberwithin{equation}{section}
\title{Arc spaces and DAHA representations}
\author{E. Gorsky}
\date{}
\begin{document}
\maketitle

\begin{abstract}
A theorem of Y. Berest, P. Etingof and V. Ginzburg states that finite dimensional irreducible representations
of a type A rational Cherednik algebra are classified by one rational number $m/n$.
Every such representation is a representation of the symmetric group $S_n$. 
We compare certain multiplicity spaces in its decomposition into irreducible representations of $S_n$ with the
spaces of differential forms on a zero-dimensional moduli space associated with the plane curve singularity $x^m=y^n$.
\end{abstract}

\section{Introduction}

Rational double affine Hecke algebras (DAHA) were introduced by I. Cherednik (\cite{chered}) in his study of the Macdonald conjectures.
Their representation theory was extensively studied by C. Dunkl (\cite{dunkl1},\cite{dunkl2}), Y. Berest, P. Etingof and V. Ginzburg (\cite{beg1},\cite{beg2}), I. Gordon and T. Stafford (\cite{gordon},\cite{gosta1},\cite{gosta2}), M. Varagnolo and E. Vasserot (\cite{vava}). Their
relation to the geometry of the Hilbert schemes of points and affine Springer fibers was discussed in \cite{gosta2},\cite{vava},\cite{yun}.
We refer the reader to the lectures \cite{etingof} and the references therein for more complete bibliography.

By construction, rational DAHA $H_{n,c}$ of type $A_{n-1}$ with parameter $c$ has a representation $M_{c}$ by Dunkl operators in the space of polynomials on the Cartan subalgebra $V_n$. In \cite{beg1} it was shown that $H_{n,c}$  has a finite-dimensional representation if and only if $c=m/n,\quad m\in (m,n)=1$. In this case there exists a unique irreducible finite-dimensional representation $L_{m/n}$, which can be constructed as a quotient of $M_{m/n}$ by a certain ideal $I_{m/n}$.

By construction, $L_{m/n}$ carry a  natural representation of the symmetric group $S_n$, so we can split it into irreducible representations
of $S_n$. The symmetric and antisymmetric parts of $L_{m/n}$ were extensively studied in connection to the representation theory of the spherical DAHA (\cite{beg1},\cite{vava}) and the geometry of the Hilbert scheme of points (\cite{gordon},\cite{gosta1},\cite{gosta2},\cite{haiman}). In particular, 
the space $L_{n+1\over n}$ is related to the $q,t$-Catalan numbers introduced by A. Garsia and M. Haiman (\cite{gaha}).
We are interested in a slightly more general problem of describing the multiplicities of the exterior power $\Lambda^{k}V_n$ in 
$L_{m/n}$. Since $\Lambda^{k}V_n$ is known to be an irreducible representation of $S_n$, we can describe this multiplicity space as
$$\mbox{\rm Hom}_{S_n}(\Lambda^{k}V_n,L_{m/n}).$$ This space is conjectured to be related to some homological invariants of torus knots (\cite{os},\cite{ors},\cite{gors}). 
Since $$\mbox{\rm Hom}_{S_n}(\Lambda^{k}V_n,M_{m/n})\simeq \Omega^{k}(V_n//S^{n}),$$
one can ask if the ideal $I_{m/n}$ is related to some natural ideal in  $\Omega^{k}(V_n//S^{n}).$

In \cite{gfs} L. Goettsche, B. Fantechi and D. van Straten constructed a zero-dimensional moduli space
$\mathcal{M}_{m,n}$ defined by the coefficients in $z$-expansion of the equation
$$(1+z^2u_2+z^3u_3+\ldots+z^nu_n)^{m}=(1+z^2v_2+z^3v_3+\ldots+z^{m}v_{m})^{n}.$$
Our main result is the following

\begin{theorema}
\label{main}
The following isomorphism holds:
$$\mbox{\rm Hom}_{S_n}(\Lambda^{k}V_n,L_{m/n})\simeq \Omega^{k}(\mathcal{M}_{m,n}).$$
\end{theorema}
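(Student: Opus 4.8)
The plan is to apply the functor $\mbox{\rm Hom}_{S_n}(\Lambda^{k}V_n,-)$, which is exact because $\mathbb{C}[S_n]$ is semisimple, to the defining short exact sequence $0\to I_{m/n}\to M_{m/n}\to L_{m/n}\to 0$. Using the identification $\mbox{\rm Hom}_{S_n}(\Lambda^{k}V_n,M_{m/n})\simeq\Omega^{k}(V_n//S^n)$ quoted above, this yields
$$0\to \mbox{\rm Hom}_{S_n}(\Lambda^{k}V_n,I_{m/n})\to \Omega^{k}(V_n//S^n)\to \mbox{\rm Hom}_{S_n}(\Lambda^{k}V_n,L_{m/n})\to 0,$$
so the theorem becomes the problem of identifying the submodule $N^{k}:=\mbox{\rm Hom}_{S_n}(\Lambda^{k}V_n,I_{m/n})$ with the kernel of a natural surjection $\Omega^{k}(V_n//S^n)\to\Omega^{k}(\mathcal{M}_{m,n})$. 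Under the Solomon isomorphism $\Omega^{k}(V_n//S^n)=\Omega^{k}(V_n)^{S_n}$, the space $N^{k}$ is precisely the space of $S_n$-invariant $k$-forms all of whose polynomial coefficients lie in the ideal $I_{m/n}\subseteq\mathbb{C}[V_n]$; here I use that $I_{m/n}$ is an honest ideal, being stable under the polynomial part of $H_{n,c}$.

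Next I would realize $\mathcal{M}_{m,n}$ as a closed subscheme of $V_n//S^n$. Writing $P(z)=\prod_{i}(1+x_iz)=1+u_2z^{2}+\dots+u_nz^{n}$ with $u_i=e_i(x)$ the elementary symmetric functions (so that $u_1=\sum_i x_i=0$ on $V_n$), the relation $P^{m}=Q^{n}$ forces $Q=P^{m/n}$ as a power series, and in characteristic zero the coefficients $v_2,\dots,v_m$ are uniquely solved as polynomials in the $u_i$. Eliminating them identifies $\mathcal{O}(\mathcal{M}_{m,n})$ with $\mathbb{C}[V_n]^{S_n}/\mathcal{J}$, where $\mathcal{J}$ is generated by the coefficients $c_j:=[z^{j}]\prod_{i}(1+x_iz)^{m/n}$ for $j>m$. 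Because Kähler differentials are intrinsic, the conormal sequence of this closed immersion gives
$$\Omega^{k}(\mathcal{M}_{m,n})=\Omega^{k}(V_n//S^n)\big/\big(\mathcal{J}\,\Omega^{k}(V_n//S^n)+d\mathcal{J}\wedge\Omega^{k-1}(V_n//S^n)\big),$$
so the theorem reduces to the single identity $N^{k}=\mathcal{J}\,\Omega^{k}(V_n//S^n)+d\mathcal{J}\wedge\Omega^{k-1}(V_n//S^n)$ inside $\Omega^{k}(V_n)^{S_n}$.

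One inclusion is formal, and rests on the observation that on a symmetric polynomial $f$ every Dunkl operator $T_y$ reduces to the ordinary derivative $\partial_y f$, since the reflection corrections annihilate $S_n$-invariants. Hence $I_{m/n}^{S_n}$ is stable under the $\partial_{x_i}$, and for $f\in\mathcal{J}\subseteq I_{m/n}^{S_n}$ the invariant form $df$ has all coefficients in $I_{m/n}$, i.e. $d\mathcal{J}\subseteq N^{1}$. As $I_{m/n}$ is an ideal, wedging such forms with arbitrary invariant forms keeps the coefficients in $I_{m/n}$, so $\mathcal{J}\,\Omega^{k}+d\mathcal{J}\wedge\Omega^{k-1}\subseteq N^{k}$ and we obtain a canonical surjection $\Omega^{k}(\mathcal{M}_{m,n})\to\mbox{\rm Hom}_{S_n}(\Lambda^{k}V_n,L_{m/n})$. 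This step already uses the key input $\mathcal{J}\subseteq I_{m/n}$, namely that each $c_j$ with $j>m$ vanishes in $L_{m/n}$, and I expect this to be the main obstacle. I would prove it by an explicit Dunkl-operator computation generalizing the rank-one case, where $x^{m}$ is a singular vector for $c=m/2$ and indeed $\mathcal{J}=(x^{m+1})\subset(x^{m})=I_{m/2}$, showing that each $c_j$ lies in the submodule generated by the lowest singular vector of $M_{m/n}$.

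Finally, to upgrade this surjection to an isomorphism I would match dimensions. For $k=0$ this is the assertion that the symmetric part $eL_{m/n}$ is presented exactly by the Goettsche--Fantechi--van Straten equations; equality of dimensions $\dim eL_{m/n}=\frac{1}{m+n}\binom{m+n}{n}=\mathrm{length}\,\mathcal{O}(\mathcal{M}_{m,n})$ then forces the surjection $\mathcal{O}(\mathcal{M}_{m,n})\to eL_{m/n}$ to be an isomorphism. For general $k$ I would compare the graded $S_n$-character of $L_{m/n}$ with the Hilbert series of $\Omega^{k}(\mathcal{M}_{m,n})$ read off from the presentation above; equivalently, one proves the reverse inclusion $N^{k}\subseteq\mathcal{J}\,\Omega^{k}+d\mathcal{J}\wedge\Omega^{k-1}$, which says that the invariant forms with coefficients in $I_{m/n}$ are generated over $\Omega^{\bullet}(V_n//S^n)$ by $\mathcal{J}$ together with its de Rham differential. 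The recurring difficulty is the precise control of the ideal $I_{m/n}$: once its symmetric part is pinned down to $\mathcal{J}$, the passage to all $k$ is dictated by the de Rham structure and the identity $T_y f=\partial_y f$ on invariants.
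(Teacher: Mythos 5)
Your reduction is exactly the one the paper uses: exactness of $\mathrm{Hom}_{S_n}(\Lambda^{k}V,-)$, Solomon's theorem, elimination of the $v_j$ to realize $\mathcal{M}_{m,n}$ inside $V//S_n$ with ideal $\mathcal{J}=J_{m/n}$, and the conormal presentation of $\Omega^{k}(\mathcal{M}_{m,n})$; your observation that $D_i$ coincides with $\partial_i$ on invariants, so that $d\mathcal{J}$ automatically has coefficients in $I_{m/n}$ once $\mathcal{J}\subseteq I_{m/n}$ is known, is a clean substitute for the paper's Corollary \ref{nextcoef}. But the two steps that carry the actual content are left unproved. First, $\mathcal{J}\subseteq I_{m/n}$: ``an explicit Dunkl-operator computation generalizing the rank-one case'' is not an argument. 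The paper's Theorem \ref{dahalow} establishes this (and the equality $(I_c)^{S_n}=J_{m/n}$, which you also need for injectivity) by noting that the generators $f_i$ span a copy of $V$, that $\mathrm{Hom}_{S_n}(V,\mathbb{C}[V]/\mathbb{C}[V]^{S_n}_{+})$ is spanned by the classes of $x_i^{k}$ for $1\le k\le n-1$, and then computing $\sum_i x_i^{k}f_i=-\frac{n}{m}(m+k)v_{m+k}-\sum_{j=1}^{k-1}v_{m+j}p_{k-j}$ from the recursion $(k+1)v_{k+1}=-\frac{m}{n}\sum_{i}v_{k-i}p_{i+1}$; the triangular shape gives $v_{m+1},\dots,v_{m+n-1}\in I_{m/n}$, and these generate $J_{m/n}$.

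Second, and more seriously, the reverse inclusion $N^{k}\subseteq\mathcal{J}\,\Omega^{k}+d\mathcal{J}\wedge\Omega^{k-1}$ for $k\ge1$ cannot be obtained by the dimension count you propose: neither $\dim\mathrm{Hom}_{S_n}(\Lambda^{k}V,L_{m/n})$ nor the length of $\Omega^{k}(\mathcal{M}_{m,n})$ is independently available for $k\ge1$ (computing them is essentially the point of the theorem; only for $k=0$ does the comparison of $\frac{1}{m+n}\binom{m+n}{n}$ with the Goettsche--Fantechi--van Straten multiplicity close the argument). Saying ``equivalently, one proves the reverse inclusion'' merely restates the problem. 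The paper's proof of this half is the determinant analysis in Theorem \ref{t1}: an element of $\mathrm{Hom}_{S_n}(\Lambda^{k}V,I_{m/n})$ is written as a combination of determinants whose first row is $(f_{\alpha_1}h_1,\dots,f_{\alpha_k}h_k)$ with $h_i$ symmetric in all variables but $x_{\alpha_i}$, and Lemma \ref{exc} rewrites $h\cdot f_1$ as $\sum_{k>m}\bigl(\phi_{j}\,\partial v_{k}/\partial x_1+\rho_{j}v_{k}\bigr)$ with invariant coefficients, turning each such determinant, modulo $J_{m/n}$, into $\lambda(dv_{s}\wedge du_{\beta_1}\wedge\cdots\wedge du_{\beta_{k-1}})$ with $s>m$, which visibly lies in the defining ideal. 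Without an argument of this kind you have only constructed a surjection $\Omega^{k}(\mathcal{M}_{m,n})\to\mathrm{Hom}_{S_n}(\Lambda^{k}V,L_{m/n})$, not an isomorphism.
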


The proof is explicit: we identify $\mathcal{M}_{m,n}$ with a subscheme in $\mbox{\rm Spec}~\mathbb{C}[u_2,\ldots,u_n]$,
and identify $u_k$ with the $k$-th elementary symmetric polynomial on $V_n$.
This allows us to embed $\mathcal{M}_{m,n}$ into the quotient $V_n//S^{n}$.
It rests to compare the defining ideals in both cases.

\begin{corollary}
The left hand side is symmetric in $m$ and $n$:
$$\mbox{\rm Hom}_{S_n}(\Lambda^{k}V_n,L_{m/n})=\mbox{\rm Hom}_{S_m}(\Lambda^{k}V_{m},L_{n/m}).$$
\end{corollary}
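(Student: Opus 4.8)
The plan is to reduce the corollary entirely to the main theorem by exhibiting a scheme isomorphism $\mathcal{M}_{m,n}\cong\mathcal{M}_{n,m}$ and then invoking the fact that K\"ahler differentials depend only on the isomorphism type of the underlying scheme. First I would apply Theorem \ref{main} on both sides: it gives $\mbox{\rm Hom}_{S_n}(\Lambda^{k}V_n,L_{m/n})\simeq\Omega^{k}(\mathcal{M}_{m,n})$ and, with the roles of $m$ and $n$ interchanged, $\mbox{\rm Hom}_{S_m}(\Lambda^{k}V_m,L_{n/m})\simeq\Omega^{k}(\mathcal{M}_{n,m})$. Thus it suffices to prove $\Omega^{k}(\mathcal{M}_{m,n})\simeq\Omega^{k}(\mathcal{M}_{n,m})$.

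The key observation is that the defining equation of $\mathcal{M}_{m,n}$,
$$(1+z^2u_2+z^3u_3+\ldots+z^nu_n)^{m}=(1+z^2v_2+z^3v_3+\ldots+z^{m}v_{m})^{n},$$
is invariant under the substitution that simultaneously swaps $m\leftrightarrow n$ and relabels the coordinates $u\leftrightarrow v$. Concretely, reading the equation from right to left and renaming $v_j\mapsto u_j$ for $2\le j\le m$ and $u_i\mapsto v_i$ for $2\le i\le n$ turns it into the defining equation of $\mathcal{M}_{n,m}$. Equating the $z$-coefficients produces the very same system of polynomial relations in both cases, so the two moduli schemes have isomorphic coordinate rings and are therefore isomorphic as schemes.

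Since the module of K\"ahler differentials, and hence each of its exterior powers $\Omega^{k}$, takes isomorphisms of schemes to isomorphisms of modules, the scheme isomorphism $\mathcal{M}_{m,n}\cong\mathcal{M}_{n,m}$ yields $\Omega^{k}(\mathcal{M}_{m,n})\simeq\Omega^{k}(\mathcal{M}_{n,m})$, closing the chain of isomorphisms and giving the claimed equality.

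I would expect the only subtle point to lie in matching this intrinsically symmetric object with the concrete realization used in the proof of Theorem \ref{main}: there $\mathcal{M}_{m,n}$ is cut out inside $\mbox{\rm Spec}~\mathbb{C}[u_2,\ldots,u_n]$ after eliminating the variables $v_j$, a presentation that breaks the manifest $u\leftrightarrow v$ symmetry. The hard part will therefore be to check that this elimination is a scheme isomorphism onto its image, so that $\Omega^{k}$ computed from the $u$-coordinate presentation agrees with $\Omega^{k}$ of the full symmetric scheme in the combined $(u,v)$-coordinates. Granting that the projection loses no scheme-theoretic information, which should follow from the construction that establishes Theorem \ref{main}, the symmetry of the defining equation delivers the result directly.
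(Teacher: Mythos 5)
Your proposal is correct and is essentially the paper's own (implicit) argument: the corollary follows from the main theorem applied to both sides together with the manifest $(m,u)\leftrightarrow(n,v)$ symmetry of the defining equation of $\mathcal{M}_{m,n}$. The subtle point you flag --- that the asymmetric presentation obtained by eliminating the $v$-variables gives the same ring --- is exactly what the paper records in Lemma \ref{utov} and its corollary $\mathbb{C}[u_1,\ldots,u_n]/J_{m/n}\simeq \mathbb{C}[v_1,\ldots,v_m]/J_{n/m}$.
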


\begin{remark}
This relation was proved for $k=0$ by D. Calaque, B. Enriquez and P. Etingof in \cite{calaque} by different method.
I. Losev announced (\cite{losev}) a generalization of their proof for higher values of $k$.
\end{remark}

In sections 2 and 3 we briefly discuss the constructions of the representation $L_{m/n}$ and the moduli space $\mathcal{M}_{m,n}$.
In section 4 we compare the constructions and prove the main theorem. In section 5 we discuss the action of Dunkl operators on
$\Omega^{k}(\mathcal{M}_{m,n})$ and the action of the Olshanetsky-Perelomov Hamiltonians on $u_l$ and $v_l$.

The author is grateful to J. Rasmussen, A. Oblomkov, V. Shende and A. Kirillov Jr. for useful discussions.
This research was partially supported by the grants RFBR-10-01-00678, NSh-8462.2010.1  and the Dynasty fellowship for young scientists.

\section{Rational Cherednik algebras}

\begin{definition}(\cite{beg1})
The rational Cherednik algebra $H_{c}$ of type $A_{n-1}$ with parameter $c$ is an associative algebra 
generated by $V=\mathbb{C}^{n-1}, V^{*}$ and $S_n$ with the following defining relations:
\begin{eqnarray}
\label{DAHA}
\sigma\cdot x\cdot \sigma^{-1}=\sigma(x),\quad \sigma\cdot y\cdot \sigma^{-1}=\sigma(y),\quad \forall x\in V, y\in V^{*}, \sigma\in S_n\nonumber \\
x_1\cdot x_2=x_2\cdot x_1,\quad y_1\cdot y_2=y_2\cdot y_1\quad \forall x_1,x_2\in V,y_1,y_2\in V^{*}\\
y\cdot x-x\cdot y=<y,x>-c\sum_{s\in \mathcal{S}}<\alpha_s,x><y,\alpha_{s}^{\vee}>\cdot s\quad \forall x\in V,y\in V^{*},\nonumber
\end{eqnarray}
where $\mathcal{S}\subset S_n$ is the set of all transpositions, and $\alpha_s, \alpha_{s}^{\vee}$ are the corresponding roots and coroots.
\end{definition}

The last defining relation is motivated by the following construction of C. Dunkl (\cite{dunkl1}). 

\begin{definition}
We introduce the Dunkl operators with parameter $c$ by the formula
$$D_{i}={\partial\over \partial x_i}-c\sum_{j\neq i}{s_{ij}-1\over x_i-x_j}.$$
\end{definition}

\begin{proposition}
Consider the space $\mathbb{C}[V]$ of polynomial functions on $V$, where the elements of $V$ act by multiplication and the basis of $V^{*}$ acts by Dunkl operators. This produces a representation of $H_{c}$, i. e. all defining relations (\ref{DAHA}) hold. 
This representation is denoted by $M_{c}$.
\end{proposition}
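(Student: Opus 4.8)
The plan is to verify directly that the three families of defining relations in (\ref{DAHA}) are satisfied by the assignment $x \mapsto (\text{multiplication by } x)$ and $y \mapsto D_y$ (the Dunkl operator associated to $y \in V^*$), acting on $\mathbb{C}[V]$. The first relation involving the $S_n$-action and the second relation (commutativity of multiplication operators) are immediate, so I would dispatch those in a sentence. The genuine content is entirely in the third relation, the commutator $[D_i, x_j] = \langle y, x\rangle - c\sum_s \langle\alpha_s,x\rangle\langle y,\alpha_s^\vee\rangle\, s$.

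First I would establish the two essential properties of the Dunkl operators separately, since the commutator relation follows cleanly from them. The first property is that the Dunkl operators \emph{commute among themselves}: $[D_i, D_j] = 0$. This is the classical integrability statement of C. Dunkl, and I would either cite it as known or verify it by a direct computation, expanding the commutator and checking that the terms arising from the rational parts cancel in pairs once one uses the identity $\tfrac{1}{(x_i-x_j)(x_j-x_k)} + \text{cyclic} = 0$ on the triple-overlap terms. The second, and for us the more important, property is the commutator of a Dunkl operator with multiplication. I would compute $[D_i, x_j]$ on a test polynomial $f$ by applying the Leibniz-type rule: the differential part $\partial/\partial x_i$ contributes $\delta_{ij}$ (this gives the pairing term $\langle y, x\rangle$), and the divided-difference part contributes, for each transposition $s_{kl}$, a term proportional to how $x_j$ interacts with $\tfrac{s_{kl}-1}{x_k-x_l}$.

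The key calculation is to show $[D_i, x_j] = \delta_{ij} - c\sum_{k}\langle\alpha_{s_{ik}}, x_j\rangle\langle y_i, \alpha_{s_{ik}}^\vee\rangle\, s_{ik}$, which after matching conventions for roots and coroots is exactly the right-hand side of the third relation in (\ref{DAHA}). The crucial intermediate identity is
$$
\frac{s_{kl}-1}{x_k - x_l}\,(x_j f) - x_j\,\frac{s_{kl}-1}{x_k - x_l}\,f
= \bigl(s_{kl}(x_j) - x_j\bigr)\,\frac{s_{kl}f}{x_k - x_l},
$$
where the numerator $s_{kl}(x_j) - x_j$ vanishes unless $j \in \{k,l\}$, and when it is nonzero it equals $\pm(x_l - x_k) = \mp(x_k - x_l)$, so that it cancels the denominator and leaves a clean multiple of $s_{kl}$ with no pole. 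Collecting these contributions over all transpositions and comparing coefficients against the coroot pairings $\langle y, \alpha_s^\vee\rangle$ completes the identification.

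The main obstacle is purely bookkeeping: one must fix a consistent normalization of the roots $\alpha_s$, coroots $\alpha_s^\vee$, and the pairing $\langle\,,\rangle$ so that the combinatorial output of the divided-difference computation matches the abstractly-stated relation term-by-term, including all signs. There is no conceptual difficulty, but the sign and normalization conventions must be pinned down carefully, and I would devote the bulk of the argument to exhibiting that the pole-cancellation identity above produces precisely the coefficient $-c\langle\alpha_s,x\rangle\langle y,\alpha_s^\vee\rangle$ for each transposition $s$, with the differential part accounting for the remaining $\langle y, x\rangle$.
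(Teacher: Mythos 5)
The paper offers no proof of this proposition: it is stated as a known fact, with the construction attributed to C.~Dunkl (\cite{dunkl1}) and used throughout \cite{beg1} and \cite{etingof}. Your direct verification is the standard argument and the outline is sound: the $S_n$-equivariance and the commutativity of the multiplication operators are indeed immediate, the pole-cancellation identity
$\frac{s_{kl}-1}{x_k-x_l}(x_jf)-x_j\frac{s_{kl}-1}{x_k-x_l}f=(s_{kl}(x_j)-x_j)\frac{s_{kl}(f)}{x_k-x_l}$
is correct and does reduce the third relation to bookkeeping of roots, coroots and signs (note that since $D_i$ only involves transpositions $s_{ik}$ containing $i$, only $s=s_{ij}$ survives in the sum, matching the fact that $\langle\alpha_s,x_j\rangle\langle y_i,\alpha_s^\vee\rangle$ vanishes for all other transpositions). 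Two points deserve more care than your sketch gives them. First, the commutativity $[D_i,D_j]=0$ is the genuinely nontrivial content of the proposition (Dunkl's theorem); the cancellation of the triple-overlap terms via $\frac{1}{(x_i-x_j)(x_j-x_k)}+\mathrm{cyclic}=0$ is the right idea, but a complete direct verification also has to handle the mixed terms $[\partial_i,\frac{s_{jk}-1}{x_j-x_k}]$, so citing Dunkl is the cleaner option. Second, $V$ here is the $(n-1)$-dimensional reflection representation $\{\sum x_i=0\}$, so the pairing term is $\langle y_i,x_j\rangle=\delta_{ij}-\tfrac1n$ rather than $\delta_{ij}$; this is part of the normalization bookkeeping you correctly flag but should be stated explicitly, along with the observation that $\frac{s_{ij}f-f}{x_i-x_j}$ is again a polynomial, so that $D_i$ really does preserve $\mathbb{C}[V]$.
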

 
An theorem of Y. Berest, P. Etingof, and V. Ginzburg says that for $c > 0$, all finite dimensional
irreducible representations of $H_c$ can be obtained from this construction:

\begin{theorem}(\cite{beg1})
$H_c$ has finite dimensional representations if and only if $c = m/n$, where $m$
is an integer  and $(m, n)=1$. In this case, $H_{m/n}$ has a unique (up to isomorphism) finite
dimensional irreducible representation $L_{m/n}$. For $c = m/n > 0$, $L_c = M_c/I_c$, where $I_c$ is an ideal generated by
some homogeneous polynomials of degree $m.$
\end{theorem}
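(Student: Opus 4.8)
The statement is quoted from \cite{beg1}, so what follows is a sketch of how one could establish it from scratch. The plan is to place everything inside the highest-weight category $\mathcal{O}_c$ of $H_c$. By the PBW decomposition $H_c\cong\mathbb{C}[V]\otimes\mathbb{C}S_n\otimes\mathbb{C}[V^*]$ one attaches to each irreducible $S_n$-module $\tau$ a standard module $\Delta_c(\tau)$, which is $\mathbb{C}[V]\otimes\tau$ as a graded vector space and is induced by letting $V^*$ act by zero on the lowest piece $\tau$; the polynomial representation is $M_c=\Delta_c(\mathrm{triv})$. The Euler element $\mathbf{eu}=\sum_i x_iy_i+\mathrm{const}$ makes every object $\mathbb{Z}_{\ge 0}$-graded and acts on $\tau$ by a scalar $h_c(\tau)$ affine-linear in $c$; for $S_n$ one has $h_c(\tau)-h_c(\mathrm{triv})=c\,(\mathrm{ct}(\mathrm{triv})-\mathrm{ct}(\tau))$, where $\mathrm{ct}(\tau)$ is the sum of contents of the Young diagram of $\tau$. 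The first step is the formal observation that any finite-dimensional $H_c$-module lies in $\mathcal{O}_c$: the operator $\mathbf{eu}$ has finitely many eigenvalues on it while the $x_i$ (resp.\ $y_i$) raise (resp.\ lower) eigenvalues by one, so both $\mathbb{C}[V]$ and $\mathbb{C}[V^*]$ act locally nilpotently. Hence every finite-dimensional irreducible is one of the $L_c(\tau)$, the unique simple quotient of $\Delta_c(\tau)$, and finite-dimensionality is equivalent to the support of $L_c(\tau)$ over $\mathbb{C}[V]$ being $\{0\}$.

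Next I would reduce finite-dimensionality to the existence of singular vectors. One equips $\Delta_c(\tau)$ with its contravariant form, so that $L_c(\tau)=\Delta_c(\tau)/\mathrm{rad}$, and $L_c(\tau)$ is finite-dimensional precisely when this form is degenerate in every sufficiently high degree. Such degeneracies are produced by nonzero maps $\Delta_c(\tau')\to\Delta_c(\tau)$, equivalently by singular vectors (polynomials annihilated by all Dunkl operators $D_i$) transforming in $\tau'$, and a nonzero such map forces $h_c(\tau')-h_c(\tau)\in\mathbb{Z}_{>0}$. Specializing to $\tau=\mathrm{triv}$, the equation $c\,(\mathrm{ct}(\mathrm{triv})-\mathrm{ct}(\tau'))\in\mathbb{Z}_{>0}$ already forces $c\in\mathbb{Q}$; since among all $\tau'\ne\mathrm{triv}$ the content sum is maximized by the standard representation $V=(n-1,1)$, for which $\mathrm{ct}(\mathrm{triv})-\mathrm{ct}(V)=n$, the minimal admissible degree is $d=cn$, a positive integer iff the reduced denominator of $c$ divides $n$. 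Writing $c=m/n$ with $(m,n)=1$ then makes $d=m$ and singles out $V$ as the $S_n$-type of the lowest singular vectors — exactly the ``degree $m$'' and the appearance of $\Lambda^1V_n=V_n$ that the rest of the paper exploits.

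The core of the proof, and the step I expect to be the genuine obstacle, is the explicit type-$A$ analysis at $c=m/n$: one must (i) show that the spectral condition is actually realized, i.e.\ exhibit a nonzero singular vector of degree $m$ transforming in $V$, and (ii) prove that the submodule $I_c=I_{m/n}$ it generates has finite-codimensional, i.e.\ zero-dimensional, support. For (i) the singular vector can be written down as a suitable specialization of a Jack symmetric function, or found directly by solving $D_ip=0$ degree by degree; this is where the coprimality $(m,n)=1$ and the positivity of $c$ really enter, guaranteeing both existence and the minimality of $m$. For (ii) I would either run a direct dimension count, which yields $\dim L_{m/n}=m^{n-1}$, or argue geometrically that no positive-dimensional stratum of the reflection arrangement survives in the support, via the parabolic restriction functors of Bezrukavnikov and Etingof; the latter route also explains why the denominator must be exactly $n$, since for a smaller denominator the obstruction lives on a proper parabolic $S_a\times S_b\times\cdots$ and $L_c(\mathrm{triv})$ retains positive-dimensional support.

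Finally, uniqueness is a byproduct of the same bookkeeping. Fixing $c=m/n>0$, the integrality constraint $h_c(\tau')-h_c(\tau)\in\mathbb{Z}_{>0}$ together with the content formula and the support computation leaves $\tau=\mathrm{triv}$ as the only irreducible whose standard module acquires a finite-codimensional radical, so $L_{m/n}$ is the unique finite-dimensional irreducible; the symmetry $c\mapsto-c$, $\mathrm{triv}\leftrightarrow\mathrm{sign}$ then accounts for the negative values of $m$ in the classification. All of the categorical scaffolding here is formal once the grading and the contravariant form are in place; the delicate content is entirely concentrated in the construction of the singular vector and the accompanying zero-dimensionality of the support.
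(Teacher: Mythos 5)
This theorem is not proved in the paper at all: it is quoted verbatim from \cite{beg1}, so there is no internal proof to measure your sketch against. What the paper does supply, immediately after the statement, is precisely the hard type-$A$ data that you correctly isolate as the core of any proof: the explicit singular vectors $f_{i}=\mbox{\rm Coef}_{m}[(1-zx_i)^{-1}\prod_{j}(1-zx_j)^{m/n}]$ of degree $m$ transforming in the standard representation (your step (i)), the identity $D_s(f_i)=0$ recorded in Corollary~\ref{db}, which is what makes $I_{m/n}$ an $H_{m/n}$-submodule, and the remark that the $f_i$ form a regular sequence, so that Bezout gives the zero-dimensional support and $\dim L_{m/n}=m^{n-1}$ (your step (ii)). Your categorical scaffolding --- category $\mathcal{O}_c$, the Euler element and the content formula $h_c(\tau)-h_c(\mathrm{triv})=c(\mathrm{ct}(\mathrm{triv})-\mathrm{ct}(\tau))$, the contravariant form, and the Bezrukavnikov--Etingof parabolic restriction argument for ruling out other denominators and other lowest weights --- is sound and is the standard modern route; note, though, that it is not the original argument of \cite{beg1}, which ran through the spherical subalgebra and shift operators, and that the restriction-functor technology postdates that paper. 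Your content computation checks out ($\mathrm{ct}(\mathrm{triv})-\mathrm{ct}(V)=n$, uniquely minimal among nontrivial $\tau'$, forcing $d=m$ when $c=m/n$), and you are right to flag that the necessity direction needs more than the nonexistence of a degree-$cn$ singular vector, since other $\tau'$ can satisfy the integrality constraint for non-admissible $c$. As a blind reconstruction of a cited result the proposal is acceptable; just be aware that the paper's own contribution begins exactly where your sketch defers to ``solving $D_ip=0$ degree by degree,'' namely the closed formula for the $f_i$ via the series $B_i(z)$.
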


Following \cite{dunkl2} and \cite{chmutova}, we would like to give an explicit construction of these polynomials for $c=m/n$:
$$f_{i}=\mbox{\rm Coef}_{m}[(1-zx_i)^{-1}\prod_{i=1}^{n}(1-zx_i)^{m\over n}].$$

\begin{remark}
It is known that $f_i$ form a regular sequence, hence $I_{m/n}$ defines a 0-dimensional complete intersection in $V$. 
The Bezout's theorem implies the dimension formula (\cite{beg1})
$$\dim L_{m/n}=\dim \mathbb{C}[V]/I_{m/n}=m^{n-1}.$$
\end{remark}

Let us explain the choice of the polynomials $f_i$. From now on we will assume that $c=m/n$.
\begin{definition}
Following \cite{dunkl2}, let us introduce the formal series
$$F(z)=\prod_{i=1}^{n}(1-zx_i)^{m/n},\quad B_{i}(z)={1\over 1-zx_i}F(z).$$
\end{definition}



\begin{lemma}(\cite{dunkl2}) 
The action of the Dunkl operators on $B_i(z)$ is given by the formula
$$D_{s}B_{i}(z)=\delta_{is}\left(z^2{dB_i\over dz}+(1-m)zB_i(z)\right)$$
\end{lemma}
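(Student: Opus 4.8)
The plan is to verify the formula by a direct computation, treating both sides as formal power series in $z$ whose coefficients are rational functions of $x_1,\dots,x_n$. The Dunkl operator $D_s$ splits into its differential part $\partial/\partial x_s$ and its reflection part, and I would handle these two contributions separately before recombining them. Two elementary facts drive everything. First, logarithmic differentiation of $F(z)=\prod_{j}(1-zx_j)^{m/n}$ gives $\partial F/\partial x_s=-\tfrac{m}{n}\tfrac{z}{1-zx_s}F$, so differentiating $B_i=(1-zx_i)^{-1}F$ produces a single term when $i\neq s$ and two terms (one carrying a double pole $(1-zx_i)^{-2}$) when $i=s$. Second, since $F$ is $S_n$-invariant, each transposition acts by $s_{ij}B_i=B_j$, whence $(s_{ij}-1)B_i=B_j-B_i$ and
$$\frac{B_j-B_i}{x_i-x_j}=-\frac{z}{(1-zx_i)(1-zx_j)}\,F.$$
In particular the simple poles along $x_i=x_j$ cancel, so $D_sB_i$ is again a power series with polynomial coefficients.

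For $i\neq s$ the computation is short. In the reflection sum only the $j=i$ term is nonzero, because $s_{sj}$ fixes $B_i$ for $j\neq i,s$; by the displayed identity this term is a $\tfrac{m}{n}$-multiple of $z\,F/\big((1-zx_s)(1-zx_i)\big)$. This is exactly what $\partial B_i/\partial x_s=-\tfrac{m}{n}\tfrac{z}{1-zx_s}B_i$ produces, with the opposite sign, so the two contributions cancel and $D_sB_i=0$, in agreement with $\delta_{is}=0$.

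The case $i=s$ is the heart of the lemma, and the main obstacle is the partial-fraction bookkeeping. Here the differential part gives exactly $(1-\tfrac{m}{n})\tfrac{z}{(1-zx_i)^2}F$, while the reflection sum now receives a contribution from every $j\neq i$, namely a $\tfrac{m}{n}$-multiple of $\tfrac{z}{1-zx_i}\sum_{j\neq i}(1-zx_j)^{-1}F$. On the right-hand side I would compute $dB_i/dz$ by logarithmic differentiation in $z$, obtaining a double-pole term $\tfrac{x_i}{(1-zx_i)^2}F$ together with $-\tfrac{m}{n}\tfrac{F}{1-zx_i}\sum_j \tfrac{x_j}{1-zx_j}$. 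The decisive step is the identity
$$\frac{x_j}{1-zx_j}=\frac1z\left(\frac{1}{1-zx_j}-1\right),$$
which rewrites $z^2\,dB_i/dz$ in terms of the very same quantities $(1-zx_i)^{-2}$ and $\sum_j(1-zx_j)^{-1}$ that appear in $D_iB_i$. After this substitution the double-pole terms and the $\sum_j$-terms match on the two sides; the residual simple-pole terms proportional to $z/(1-zx_i)$ acquire the coefficient $m-1$ — produced by the constant $-n$ in the identity weighted by $c=m/n$ — and are cancelled precisely by the explicit summand $(1-m)zB_i$. Collecting all terms then yields the stated formula.
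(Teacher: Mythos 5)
The paper does not actually prove this lemma --- it is quoted from Dunkl's paper with only a citation --- so your direct verification is the natural way to supply the missing argument, and the computation you outline is the right one. Your two building blocks are correct: $\partial F/\partial x_s=-\tfrac{m}{n}\tfrac{z}{1-zx_s}F$, and $(s_{ij}-1)B_i=B_j-B_i$ with $\tfrac{B_j-B_i}{x_i-x_j}=-\tfrac{zF}{(1-zx_i)(1-zx_j)}$. Carrying these through together with $\tfrac{zx_j}{1-zx_j}=\tfrac{1}{1-zx_j}-1$ does reproduce both sides term by term: the double-pole terms carry the factor $1-\tfrac{m}{n}$, the cross terms carry $\tfrac{m}{n}$, and the leftover $(m-1)zB_i$ from the constant $-n$ is killed by the summand $(1-m)zB_i$, exactly as you say.

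The one place where you are not careful enough is the sign of the reflection contribution, and it is precisely the delicate point of the whole lemma. You say the $j=i$ term of the reflection sum is ``a $\tfrac{m}{n}$-multiple'' of $zF/\big((1-zx_s)(1-zx_i)\big)$ and that it cancels $\partial B_i/\partial x_s$; whether it cancels or doubles depends on that unspecified sign. With the operator written literally in the paper, $D_s=\partial_s-c\sum_{j\neq s}\tfrac{s_{sj}-1}{x_s-x_j}$, the $j=i$ term equals $-c\,\tfrac{B_s-B_i}{x_s-x_i}=-\tfrac{m}{n}\tfrac{zF}{(1-zx_s)(1-zx_i)}$, which has the \emph{same} sign as $\partial_s B_i$, so one would get $D_sB_i=-\tfrac{2m}{n}\tfrac{zF}{(1-zx_s)(1-zx_i)}\neq 0$ for $s\neq i$, and likewise the cross terms in $D_iB_i$ come out with the opposite sign to those in $z^2B_i'+(1-m)zB_i$. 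The identity as stated holds for $D_s=\partial_s-c\sum_{j\neq s}\tfrac{1-s_{sj}}{x_s-x_j}$ (the Berest--Etingof--Ginzburg convention); the mismatch is a sign slip in the paper's definition rather than an error in your strategy, but a complete proof must pin down the convention and track this one sign explicitly, since it governs the only nontrivial cancellation in the $i\neq s$ case.
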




\begin{corollary}(\cite{dunkl2})
\label{db}
$$D_{s}\mbox{\rm Coef}_{k}[B_i(z)]=\delta_{is}(k-m)\mbox{\rm Coef}_{k-1}[B_i(z)].$$
In particular,
$$D_s(f_i)=D_{s}\mbox{\rm Coef}_{m}[B_i(z)]=0\quad.$$
This explains why $f_i$ generate an ideal invariant under DAHA action.
\end{corollary}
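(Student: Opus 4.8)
The plan is to deduce the Corollary from the Lemma simply by comparing coefficients of $z^k$ on the two sides. The one observation that makes this legitimate is that $z$ is a formal parameter carrying no dependence on the coordinates $x_1,\ldots,x_n$, so each Dunkl operator $D_s$ acts on the series $B_i(z)$ term by term; equivalently $D_s$ commutes with the operation $\mbox{\rm Coef}_k[\cdot]$ of extracting the coefficient of $z^k$. Granting this, it suffices to identify the coefficient of $z^k$ on the right-hand side of the Lemma.

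First I would write $B_i(z)=\sum_k b_k z^k$ with $b_k=\mbox{\rm Coef}_k[B_i(z)]$, so that $z^2\frac{dB_i}{dz}=\sum_k k\,b_k z^{k+1}$ and $(1-m)zB_i(z)=(1-m)\sum_k b_k z^{k+1}$. Performing the shift $k\mapsto k-1$, the coefficient of $z^k$ in $z^2\frac{dB_i}{dz}+(1-m)zB_i(z)$ is $\big((k-1)+(1-m)\big)b_{k-1}=(k-m)b_{k-1}$. The Lemma asserts $D_sB_i(z)=\delta_{is}\big(z^2\frac{dB_i}{dz}+(1-m)zB_i(z)\big)$, so reading off the coefficient of $z^k$ on both sides yields precisely $D_s\mbox{\rm Coef}_k[B_i(z)]=\delta_{is}(k-m)\mbox{\rm Coef}_{k-1}[B_i(z)]$.

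For the displayed consequence I would specialize to $k=m$. When $s\neq i$ the Kronecker factor $\delta_{is}$ vanishes, and when $s=i$ the scalar $(k-m)=(m-m)$ vanishes; in both cases $D_sf_i=D_s\mbox{\rm Coef}_m[B_i(z)]=0$ for every $s$. Hence $f_i$ lies in the common kernel of all Dunkl operators, which is what allows the $f_i$ to generate a DAHA-invariant ideal.

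I do not expect any genuine obstacle: once the Lemma is in hand the argument is a single index shift. The only step meriting a word of care is the interchange of $D_s$ with $\mbox{\rm Coef}_k$, which holds because $D_s$ is built from $\partial/\partial x_s$ and the difference operators $(s_{sj}-1)/(x_s-x_j)$, all of which act in the variables $x_j$ and leave the formal variable $z$ untouched.
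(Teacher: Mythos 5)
Your proof is correct and is exactly the intended derivation: the paper states this as an immediate corollary of the preceding lemma, obtained by reading off the coefficient of $z^k$ (with the index shift giving the factor $(k-1)+(1-m)=k-m$) and then setting $k=m$. Your remark that $D_s$ commutes with $\mbox{\rm Coef}_k$ because it acts only in the variables $x_j$ is the right justification for the coefficient comparison.
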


\section{Arc space on a singular curve}

In \cite{gfs} L. Goettsche, B. Fantechi and D. van Straten constructed a zero-dimensional 
quasihomogeneous complete intersection associated with a curve $\{x^m=y^n\}$. 
As before, we will assume that $(m,n)=1$.

Consider a plane curve singularity $C=\{x^{m}=y^{n}\}$, and its uniformization 
$(x,y)=(t^{n},t^{m}).$  Let us consider a general deformation of this parametrization:
$$(x(t),y(t))=(t^{n}+u_{1}t^{n-1}+\ldots+u_{n},t^{m}+v_{1}t^{m-1}+\ldots+v_{n}).$$

Consider the ideal $I_{m,n}$ generated by the coefficients in $t$-expansion of the equation
\begin{equation}
\label{eqmn}
(t^{n}+u_{2}t^{n-2}+\ldots+u_{n})^{m}-(t^{m}+v_{2}t^{m-2}+\ldots+v_{m})^{n}=0.
\end{equation}

\begin{remark}
By shifting the parameter $t$ we can annihilate the coefficient $u_{1}$.
Since
$$x(t)^{m}-y(t)^{n}=(mu_{1}-nv_{1})t^{mn-1}+\mbox{\rm terms of lower degree},$$
the equation $u_{1}=0$ implies $v_{1}=0$.
\end{remark}

\begin{definition}
 The moduli scheme of arcs on $C$ is defined as
$$\mathcal{M}_{C}=\mbox{\rm Spec}\quad \mathbb{C}[u_{2},\ldots,u_{n},v_{2},\ldots v_{m}]/I_{m,n}.$$
\end{definition}

\begin{lemma}(\cite{gfs}, Example 1)
The scheme $\mathcal{M}_{C}$ is a zero-dimensional complete intersection.
\end{lemma}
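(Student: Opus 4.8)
The plan is to prove the two halves of the statement separately: first that the set-theoretic support of $\mathcal{M}_C$ is a single point, so the scheme is zero-dimensional, and then that the ideal $I_{m,n}$ is generated by exactly $m+n-2$ elements, which is the number of variables. For a zero-dimensional ideal in the Cohen--Macaulay ring $\mathbb{C}[u_2,\ldots,u_n,v_2,\ldots,v_m]$, generation by $m+n-2$ elements forces those elements to form a regular sequence, and hence $\mathcal{M}_C$ is a complete intersection. It is convenient to substitute $s=1/t$ and write $x(t)=t^nU(s)$, $y(t)=t^mW(s)$ with $U(s)=1+u_2s^2+\cdots+u_ns^n$ and $W(s)=1+v_2s^2+\cdots+v_ms^m$; then the left-hand side of (\ref{eqmn}) is $t^{mn}(U^m-W^n)$, so $I_{m,n}$ is generated by the coefficients $c_j=[s^j](U^m-W^n)$ for $j=2,\ldots,mn$.

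For zero-dimensionality, a closed point of $\mathcal{M}_C$ is a pair $(U,W)$ with $U^m=W^n$. Factoring $U(s)=\prod_{i=1}^n(1-\alpha_is)$, where $\sum\alpha_i=0$ because the $s^1$-coefficient of $U$ vanishes, and likewise $W$, unique factorization shows that a common nonzero root with multiplicity $a$ in $U$ and $b$ in $W$ must satisfy $ma=nb$. Since $(m,n)=1$ this forces $n\mid a$, and $a\le\deg U\le n$ gives $a=n$; but then $U=(1-\gamma s)^n$ with $n\gamma=\sum\alpha_i=0$, so $\gamma=0$ and $U\equiv 1$. Thus $U=W=1$ and the origin is the only point, so $\mathcal{M}_C$ is supported at $0$ and is zero-dimensional. (This is exactly where $(m,n)=1$ is used.)

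For the generation statement, I first eliminate the $v$-variables. Since $[s^j]W^n=nv_j+(\text{polynomial in }v_2,\ldots,v_{j-1})$, the coefficients $c_2,\ldots,c_m$ are triangular in $v_2,\ldots,v_m$, and setting them to zero is equivalent to $W\equiv U^{m/n}\pmod{s^{m+1}}$; that is, $(c_2,\ldots,c_m)=(v_2-w_2,\ldots,v_m-w_m)$ with $w_j:=[s^j]\,\Phi$, where $\Phi:=U^{m/n}=\sum_j w_js^j$. Modulo these relations the remaining $c_j$ become the coefficients of $U^m-\Phi_{\le m}^n=\Phi^n-\Phi_{\le m}^n$, where $\Phi_{\le m}$ is the truncation of $\Phi$. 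Writing $\Phi^n-\Phi_{\le m}^n=(\Phi-\Phi_{\le m})\,E$ with $E=\sum_{i=0}^{n-1}\Phi^i\Phi_{\le m}^{\,n-1-i}$ a unit in $\mathbb{C}[u][[s]]$ (constant term $n$), the ideal generated by the remaining $\bar c_j$ equals the ideal generated by the coefficients of $\Phi-\Phi_{\le m}=\sum_{j>m}w_js^j$, namely $(w_{m+1},w_{m+2},\ldots)$. Finally the logarithmic-derivative identity $n\Phi'U=mU'\Phi$ gives the recursion
$$ nJ\,w_J=\sum_{k=2}^{n}\big((m+n)k-nJ\big)\,u_k\,w_{J-k}. $$
An induction then shows $w_J\in(w_{m+1},\ldots,w_{m+n-1})$ for all $J>m$, so the $u$-relations need only $n-1$ generators; together with the $m-1$ relations for the $v$'s this yields $I_{m,n}=(c_2,\ldots,c_{m+n-1})$.

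The main obstacle is precisely this last reduction: a priori (\ref{eqmn}) produces $mn-1$ equations, and the delicate point is that the recursion would involve the boundary quantity $w_m=v_m$, which is \emph{not} in the ideal of $u$-relations and would break the induction at $J=m+n$. What saves the argument is that the coefficient $(m+n)k-nJ$ vanishes exactly at $(J,k)=(m+n,n)$, so $w_m$ never actually enters; this is the heart of the proof and the step I would check most carefully. Once $I_{m,n}$ is exhibited as generated by $m+n-2$ elements, the conclusion is formal: localizing at the unique point $0$, we have an ideal of height $m+n-2$ generated by $m+n-2$ elements in the Cohen--Macaulay local ring, so the generators form a regular sequence and $\mathcal{M}_C$ is a zero-dimensional complete intersection.
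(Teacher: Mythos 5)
Your proof is correct, but it reaches the count of $m+n-2$ generators by a genuinely different route than the paper. The paper's argument is a one-line trick: the ideal of coefficients of $x(t)^m-y(t)^n$ coincides with the ideal of coefficients of the logarithmic-derivative identity $mx'(t)y(t)-ny'(t)x(t)=0$, and (using $u_1=v_1=0$) the latter is a polynomial of degree $m+n-3$ in $t$, so it hands you exactly $m+n-2$ equations in the $m+n-2$ variables at once; coprimality then gives the single reduced point, as in your first step. You instead eliminate the $v$-variables by the triangular relations $v_j=w_j$, identify the residual ideal with the coefficients of $U^{m/n}$ beyond degree $m$, and use the recursion $nJ\,w_J=\sum_{k=2}^{n}\bigl((m+n)k-nJ\bigr)u_kw_{J-k}$ to cut this down to $n-1$ generators --- correctly noting that the argument survives only because the coefficient of $w_m$ vanishes at $J=m+n$. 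This is really the same identity $n\Phi'U=mU'\Phi$ as the paper's $mx'y=ny'x$, but deployed after elimination rather than before; the price is length, and the payoff is that you have effectively also proved the paper's later Lemma~\ref{utov} and anticipated the recursion~(\ref{recursion}) used in Theorem~\ref{dahalow}, producing an explicit minimal generating set $(c_2,\dots,c_{m+n-1})$, equivalently $(v_2-w_2,\dots,v_m-w_m,w_{m+1},\dots,w_{m+n-1})$, rather than just a count. Your concluding Cohen--Macaulay/height argument and the unique-factorization proof of zero-dimensionality are both sound (the latter fills in a detail the paper leaves to the reader).
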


\begin{proof}
The equation $x(t)^{m}-y(t)^{n}=0$ is equivalent to the equation
\begin{equation}
\label{logder}
mx'(t)y(t)-ny'(t)x(t)=0.
\end{equation}
The left hand side of (\ref{logder}) is a polynomial in $t$ of degree $m+n-3$. 
Therefore the ideal $I_{m,n}$ is generated by a sequence of the $(m+n-2)$ equations on  $(m-1)+(n-1)=(m+n-2)$ variables.
Since $m$ and $n$ are coprime, the reduced scheme consists of one point $(x(t),y(t))=(t^n,t^m).$
\end{proof}

\begin{corollary}
The algebra of algebraic differential forms on this moduli space can be described as
$$\Omega^{\bullet}(\mathcal{M}_{C})=\Omega^{\bullet}(\mathbb{C}^{m+n-2})/(\phi\cdot \omega_1+d\phi\wedge \omega_2|\phi\in I_{m,n}).$$
\end{corollary}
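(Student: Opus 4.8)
The plan is to recognize the asserted formula as the standard presentation of the Kähler--de Rham algebra of a quotient ring, specialized to $S:=\mathbb{C}[u_2,\ldots,u_n,v_2,\ldots,v_m]/I_{m,n}$, and to derive it from the conormal exact sequence together with the base-change and right-exactness properties of the exterior algebra functor. Write $R=\mathbb{C}[u_2,\ldots,u_n,v_2,\ldots,v_m]$, so that $\mathbb{C}^{m+n-2}=\mbox{\rm Spec}\,R$ and $\mathcal{M}_C=\mbox{\rm Spec}\,S$, and interpret $\Omega^{\bullet}(\mathcal{M}_C)$ as the module of algebraic (Kähler) differentials $\Omega^{\bullet}_{S/\mathbb{C}}=\Lambda^{\bullet}_S\,\Omega^1_{S/\mathbb{C}}$. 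The target of the computation is then the differential graded ideal $J=I_{m,n}\cdot\Omega^{\bullet}_R+dI_{m,n}\wedge\Omega^{\bullet}_R$, i.e. the ideal of $\Omega^{\bullet}_R$ generated by $I_{m,n}$ in degree $0$ and by the one-forms $d\phi$, $\phi\in I_{m,n}$, in degree $1$; the elements $\phi\cdot\omega_1+d\phi\wedge\omega_2$ in the statement are exactly the general elements of $J$.

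First I would treat degree one. The conormal (second fundamental) exact sequence for the surjection $R\to S$ reads
\[
I_{m,n}/I_{m,n}^{2}\xrightarrow{\ \delta\ }\Omega^1_R\otimes_R S\longrightarrow \Omega^1_{S/\mathbb{C}}\longrightarrow 0,\qquad \delta(\bar\phi)=d\phi\otimes 1,
\]
which is right-exact for any quotient (no complete intersection hypothesis is needed here). Since $\Omega^1_R\otimes_R S=\Omega^1_R/I_{m,n}\Omega^1_R$ and the image of $\delta$ is generated by the classes of $d\phi$, this identifies $\Omega^1_{S/\mathbb{C}}=\Omega^1_R/(I_{m,n}\Omega^1_R+R\,dI_{m,n})=(\Omega^{\bullet}_R/J)_1$.

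Next I would promote this to all degrees. Base change for the exterior algebra gives $\Lambda^{\bullet}_S(\Omega^1_R\otimes_R S)=(\Lambda^{\bullet}_R\Omega^1_R)\otimes_R S=\Omega^{\bullet}_R/I_{m,n}\Omega^{\bullet}_R$, while right-exactness of $\Lambda^{\bullet}$ applied to the surjection $\Omega^1_R\otimes_R S\to\Omega^1_{S/\mathbb{C}}$ expresses $\Lambda^{\bullet}_S\Omega^1_{S/\mathbb{C}}$ as the quotient of $\Lambda^{\bullet}_S(\Omega^1_R\otimes_R S)$ by the ideal generated by the kernel, namely by the classes of the $d\phi$. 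Combining the two displays yields
\[
\Omega^{\bullet}_{S/\mathbb{C}}=\Omega^{\bullet}_R/\bigl(I_{m,n}\Omega^{\bullet}_R+dI_{m,n}\wedge\Omega^{\bullet}_R\bigr)=\Omega^{\bullet}(\mathbb{C}^{m+n-2})/J,
\]
which is the claim. One should also check compatibility of the de Rham differential on both sides, but this is automatic: $J$ is stable under $d$ because $d(\phi\,\omega)=d\phi\wedge\omega\pm\phi\,d\omega$ shows $d(I_{m,n}\Omega^{\bullet}_R)\subset J$ and $d(dI_{m,n}\wedge\Omega^{\bullet}_R)\subset dI_{m,n}\wedge\Omega^{\bullet}_R$.

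The computation is entirely formal commutative algebra, so there is no serious obstacle; the only point requiring care is bookkeeping of the two rings $R$ and $S$ in the exterior-algebra manipulations, in particular verifying that no higher-order relations beyond $I_{m,n}$ and $dI_{m,n}$ are forced. This last is precisely guaranteed by the fact that $d$ is a derivation, so that the differential ideal generated by $I_{m,n}$ requires only the extra degree-one generators $dI_{m,n}$ and nothing more.
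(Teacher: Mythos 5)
Your argument is correct and is exactly the standard presentation the paper is invoking: the paper states this corollary without proof as an immediate consequence of the conormal exact sequence for $R\to S=R/I_{m,n}$ together with right-exactness and base change for the exterior algebra, which is precisely what you supply. The only point worth noting is that the displayed ideal is generated (over $\Omega^{\bullet}_R$) already by $I_{m,n}$ and the $d\phi_i$ for a set of generators $\phi_i$ of $I_{m,n}$, since $d(a\phi)=a\,d\phi+\phi\,da$ puts the extra terms back into $I_{m,n}\Omega^{\bullet}_R$ --- your closing remark about $d$ being a derivation covers this.
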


\begin{definition}
We assign the $q$-grading to $u_i$ and $v_i$ by the formula
$$q(u_i)=q(v_i)=i.$$
\end{definition}

\begin{lemma}(\cite{gfs}) The multiplicity of $\mathcal{M}_{C}$ is given by the formula
\label{mult0}
$$\mbox{\rm mult}(\mathcal{M}_{C})={(m+n-1)!\over m!n!}$$
\end{lemma}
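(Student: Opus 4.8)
The plan is to exploit the quasihomogeneous structure and reduce everything to a Hilbert-series computation for a graded Artinian complete intersection. First I would extend the $q$-grading to the curve parameter by setting $q(t)=1$. With the convention $q(u_i)=q(v_j)=i$ from the definition, each monomial $u_i t^{n-i}$ (and $t^n$) in $x(t)$ has weight $n$, so $x(t)$ is quasihomogeneous of weight $n$; likewise $y(t)$ is quasihomogeneous of weight $m$. Since differentiation in $t$ lowers weight by $1$, the polynomial $Q(t):=m x'(t) y(t)-n y'(t) x(t)$ appearing in the proof of the complete intersection lemma is quasihomogeneous of weight $(n-1)+m=m+n-1$.

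Next I would read off the weights of the defining equations. By that lemma, $I_{m,n}$ is generated by the $m+n-2$ coefficients $q_0,\ldots,q_{m+n-3}$ of $Q(t)=\sum_j q_j t^j$, and these form a regular sequence. Because $Q$ is quasihomogeneous of weight $m+n-1$ and $q(t)=1$, the coefficient $q_j$ is quasihomogeneous of weight $(m+n-1)-j$; as $j$ runs over $0,\ldots,m+n-3$ these weights are exactly $2,3,\ldots,m+n-1$. On the other side, the ambient ring $\mathbb{C}[u_2,\ldots,u_n,v_2,\ldots,v_m]$ has generators of weights $2,3,\ldots,n$ and $2,3,\ldots,m$.

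With these data in hand the multiplicity is a formal computation. Since $\mathcal{M}_C$ is a zero-dimensional complete intersection whose reduced point is the origin $u=v=0$, its multiplicity equals $\dim_{\mathbb{C}} \mathbb{C}[u,v]/I_{m,n}$, which is the value at $q=1$ (as a limit) of the Hilbert--Poincar\'e series. For a graded complete intersection this series factors as $\prod_j (1-q^{d_j})/\prod_i (1-q^{w_i})$, with $d_j$ the weights of the generators and $w_i$ the weights of the variables; equality of the number of generators and variables guarantees that the ratio is a polynomial, and evaluating $\lim_{q\to 1}(1-q^{d})/(1-q)=d$ factorwise gives $\mbox{\rm mult}(\mathcal{M}_C)=\prod_j d_j/\prod_i w_i$. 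Substituting the weights found above yields $\frac{2\cdot 3\cdots(m+n-1)}{(2\cdots n)(2\cdots m)}=\frac{(m+n-1)!}{m!\,n!}$, as claimed.

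I would expect the only real point requiring care to be the identification of the correct generating set: one must use the coefficients of the logarithmic-derivative polynomial $Q$, which are quasihomogeneous of the small weights $2,\ldots,m+n-1$, rather than the coefficients of $x^m-y^n$ directly (the latter sit in much higher weights and are highly redundant), and one must know that these genuinely cut out $I_{m,n}$ as a regular sequence. Both facts are supplied by the complete intersection lemma, so the remaining argument is the standard weighted-B\'ezout evaluation of the Hilbert series; the grading bookkeeping is routine once $q(t)=1$ is fixed.
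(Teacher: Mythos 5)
Your proof is correct and follows essentially the same route as the paper: the paper's own (very terse) argument is exactly the weighted-B\'ezout computation via the quasihomogeneity of $I_{m,n}$, and your write-up supplies the details the paper leaves to the reader, namely reading off the generator weights $2,3,\ldots,m+n-1$ from the logarithmic-derivative polynomial $mx'y-ny'x$ of the complete intersection lemma.
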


\begin{proof}
One can check that the ideal $I_{m,n}$ is weighted homogeneous with respect to the $q$-grading,
and the multiplicity of $\mathcal{M}_{C}$ can be computed  using Bezout's theorem:
$$\mbox{\rm mult}(\mathcal{M}_{C})={2\cdot 3\cdot \ldots\cdot (m+n-1)\over 2\cdot 3 \ldots \cdot n\cdot 2\cdot 3 \ldots \cdot m}={(m+n-1)!\over m!n!}.$$
\end{proof}

\begin{lemma}(\cite{gfs})
The Hilbert series of $\mathbb{C}[u_2,\ldots,u_n,v_2,\ldots,v_n]/I_{m,n}$ with respect to the $q$-grading equals to
$$H_{m,n}(q)={[(m+n-1)!]_{q}\over [m!]_{q}[n!]_{q}}=\prod_{k=2}^{n}{(1-q^{m+k-1})\over (1-q^{k})}.$$
\end{lemma}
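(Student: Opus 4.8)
The plan is to read off the Hilbert series directly from the structural facts already established in the preceding lemmas — that $\mathcal{M}_C$ is a \emph{weighted-homogeneous complete intersection} — and then to apply the standard Koszul formula for the graded Euler characteristic of a complete intersection, followed by a telescoping simplification. In spirit this is nothing but the $q$-refinement of the Bezout computation carried out in the proof of Lemma \ref{mult0}.

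First I would pin down the weights of all the relevant data. Assigning $t$ the weight $1$ together with $q(u_i)=q(v_i)=i$ makes both $x(t)=t^{n}+u_2t^{n-2}+\ldots+u_n$ and $y(t)=t^{m}+v_2t^{m-2}+\ldots+v_m$ weighted-homogeneous, of weights $n$ and $m$ respectively, since every monomial $u_it^{n-i}$ (resp. $v_it^{m-i}$) has weight $n$ (resp. $m$). Consequently the polynomial $Q(t)=mx'(t)y(t)-ny'(t)x(t)$ appearing in (\ref{logder}) is weighted-homogeneous of weight $(n-1)+m=m+n-1$, and by the complete-intersection lemma it has $t$-degree $m+n-3$. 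Writing $Q(t)=\sum_{j=0}^{m+n-3}b_jt^{j}$, the coefficient $b_j$ is then weighted-homogeneous of weight $(m+n-1)-j$, so the $m+n-2$ generators $b_0,\ldots,b_{m+n-3}$ of $I_{m,n}$ carry exactly the weights $m+n-1,m+n-2,\ldots,2$, one in each degree. This is precisely the $q$-refinement of the numerator $2\cdot 3\cdots(m+n-1)$ in the Bezout computation of Lemma \ref{mult0}.

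Next I would invoke the Koszul resolution. Because the $b_j$ form a regular sequence of weighted-homogeneous elements (they cut out a zero-dimensional complete intersection), the Koszul complex on $b_0,\ldots,b_{m+n-3}$ is a graded free resolution of $\mathbb{C}[u_2,\ldots,u_n,v_2,\ldots,v_m]/I_{m,n}$. Taking the alternating sum of the Hilbert series of its terms gives
$$H_{m,n}(q)=\frac{\prod_{\text{generators}}(1-q^{\deg})}{\prod_{\text{variables}}(1-q^{\deg})}=\frac{\prod_{k=2}^{m+n-1}(1-q^{k})}{\prod_{k=2}^{n}(1-q^{k})\cdot\prod_{k=2}^{m}(1-q^{k})},$$
where the denominator is the Hilbert series of the ambient polynomial ring, whose generators $u_2,\ldots,u_n$ and $v_2,\ldots,v_m$ have weights $2,\ldots,n$ and $2,\ldots,m$. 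Cancelling $\prod_{k=2}^{m}(1-q^{k})$ against the corresponding range in the numerator and shifting the index in the surviving top product leaves
$$H_{m,n}(q)=\frac{\prod_{k=m+1}^{m+n-1}(1-q^{k})}{\prod_{k=2}^{n}(1-q^{k})}=\prod_{k=2}^{n}\frac{1-q^{m+k-1}}{1-q^{k}},$$
and a parallel bookkeeping of the powers of $(1-q)$ in $[(m+n-1)!]_q$, $[m!]_q$, $[n!]_q$ identifies this with $[(m+n-1)!]_q/([m!]_q[n!]_q)$; setting $q=1$ recovers the multiplicity of Lemma \ref{mult0} as a consistency check.

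The only genuinely delicate point is the second paragraph: one must be sure that the chosen generators are an honest homogeneous regular sequence with exactly one generator in each weight $2,\ldots,m+n-1$ — equivalently, that passing from the coefficients of $x^{m}-y^{n}$ to those of $Q(t)$ does not alter the ideal. This is exactly the content of the complete-intersection lemma and its proof, so I would lean on it rather than reprove it; once it is in hand, everything else is the routine Koszul bookkeeping and cancellation indicated above.
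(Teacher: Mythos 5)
Your proposal is correct and follows essentially the same route as the paper: the paper's proof simply notes that $I_{m,n}$ is generated by a weighted homogeneous regular sequence of weights $2,3,\ldots,m+n-1$ and refers to the Bezout computation of Lemma \ref{mult0}, which is exactly the structure you make explicit via the logarithmic-derivative form of the equation and the Koszul resolution. Your version just supplies the bookkeeping (weights of the $b_j$, cancellation of the products) that the paper leaves implicit.
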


\begin{proof}
The ideal $I_{m,n}$ is generated by the weighted homogeneous regular sequence of equations of weights 
$2,3,\ldots (m+n-1)$, so the proof is analogous to the Lemma \ref{mult0}.
\end{proof}


For the further discussions we have to slightly change the notations.
Let us change $t$ to $z=t^{-1}$, then the equation (\ref{eqmn}) will have a form
\begin{equation}
\label{equv}
(1+z^2u_2+\ldots+z^{n}u_n)^{m}=(1+z^2v_2+\ldots+z^{m}v_{m})^{n}.
\end{equation}

\begin{definition}
Let $J_{m/n}$ denote the ideal in $\mathbb{C}[u_1,\ldots, u_{n}]$ generated by 
the coefficients of the series $(1+z^2u_2+\ldots+z^{n}u_n)^{m\over n}$, starting from $m+1$-st.
\end{definition}

\begin{lemma}
\label{utov}
Using (\ref{equv}), one can express $v_i$ through $u_j$.
The remaining equations on $u_{i}$ generate the ideal $J_{m/n}$.
\end{lemma} 

\begin{proof}
Let us take the $n$th root of both parts of (\ref{equv}):
$$1+z^2v_2+\ldots+z^{m}v_{m}=(1+z^2u_2+\ldots+z^{n}u_n)^{m\over n}.$$
This allows us to express $v_i$ through $u_j$ explicitly, and the remaining equations on $u_j$ express the fact
that the right hand side should be a polynomial of degree at most m.
\end{proof}

\begin{corollary}
$$\mathbb{C}[u_1,\ldots,u_n]/J_{m/n}\simeq \mathbb{C}[v_1,\ldots,v_m]/J_{n/m}.$$
\end{corollary}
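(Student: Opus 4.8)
The plan is to show that both sides of the claimed isomorphism are identified with one and the same ring, namely the coordinate ring $\mathbb{C}[u_2,\ldots,u_n,v_2,\ldots,v_m]/I_{m,n}$ of the moduli scheme, and to exploit the fact that the defining equation (\ref{equv}) is completely symmetric under the interchange $(m,u_i)\leftrightarrow(n,v_i)$.

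First I would record the observation that Lemma \ref{utov} actually produces an isomorphism, not merely an elimination of unknowns. Writing $A(z)=1+z^2u_2+\ldots+z^nu_n$ and $B(z)=1+z^2v_2+\ldots+z^mv_m$, the generating expression $A^m-B^n$ of $I_{m,n}$ factors as $(A^{m/n}-B)\cdot U(z)$ via the standard identity $X^n-Y^n=(X-Y)(X^{n-1}+\ldots+Y^{n-1})$ with $X=A^{m/n}$ and $Y=B$; here $U(z)=A^{m(n-1)/n}+\ldots+B^{n-1}$ has constant term $n\neq 0$ and is therefore a unit in the formal power series ring. Hence $I_{m,n}$ is generated by the coefficients of $A^{m/n}-B$. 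The coefficients of $z^2,\ldots,z^m$ in this difference have the form $P_k(u)-v_k$, which are affine-linear in the $v$'s with unit leading coefficient, so eliminating them identifies the quotient with $\mathbb{C}[u_2,\ldots,u_n]$ modulo the ideal generated by the remaining coefficients $\mathrm{Coef}_k[A^{m/n}]$ for $k\ge m+1$; by definition these generate $J_{m/n}$. This yields the isomorphism $\mathbb{C}[u_2,\ldots,u_n,v_2,\ldots,v_m]/I_{m,n}\simeq\mathbb{C}[u_2,\ldots,u_n]/J_{m/n}$.

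Next I would invoke the symmetry. Equation (\ref{equv}) reads $A^m=B^n$, which is unchanged by simultaneously swapping $m\leftrightarrow n$ and the tuple $(u_i)\leftrightarrow(v_i)$. Running the identical elimination argument with the roles of $u$ and $v$ reversed, now solving for $v_k$... that is, for the opposite set of unknowns, and retaining the coefficients $\mathrm{Coef}_k[B^{n/m}]$ for $k\ge n+1$, which generate $J_{n/m}$, gives $\mathbb{C}[u_2,\ldots,u_n,v_2,\ldots,v_m]/I_{m,n}\simeq\mathbb{C}[v_2,\ldots,v_m]/J_{n/m}$. Composing the two isomorphisms through the common middle term proves the corollary. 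The variables $u_1$ and $v_1$ do not occur in the respective defining series, so each appears as a single free polynomial generator that may simply be matched with the other, consistently with the remark following (\ref{eqmn}).

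I do not expect a serious obstacle here: the entire content is the symmetry of (\ref{equv}) together with Lemma \ref{utov}. The one point requiring genuine care is the verification that the elimination is clean, i.e. that passing from the coefficients of $A^m-B^n$ to those of $A^{m/n}-B$ loses no information and that the relations used to eliminate the auxiliary variables are triangular with invertible leading terms. This is exactly where the invertibility of the unit $U(z)$ enters, and it is what upgrades Lemma \ref{utov} from an elimination of unknowns to a genuine ring isomorphism.
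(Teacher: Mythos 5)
Your proposal is correct and takes essentially the same route as the paper: the corollary is stated there without proof because it is immediate from Lemma \ref{utov} applied once as written and once with the roles of $(m,u_i)$ and $(n,v_i)$ exchanged in the symmetric equation (\ref{equv}), identifying both quotients with $\mathbb{C}[u_2,\ldots,u_n,v_2,\ldots,v_m]/I_{m,n}$. Your factorization $A^m-B^n=(A^{m/n}-B)\cdot U(z)$ with $U(z)$ a unit is simply a careful justification of the paper's ``take the $n$th root of both sides'' step, and your handling of the free variables $u_1$, $v_1$ is consistent with the paper's remark that they can be normalized away.
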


\begin{example}
\label{ex2k}
For $n=2$ one can check that $J_{(2k+1)/2}$ is generated by a single polynomial $u_2^{k+1}$.
Therefore the algebra of differential forms $\Omega^{\bullet}(\mathcal{M}_{C})$ is defined by the equations
$$u_2^{k+1}=0,\quad u_2^{k}du_2=0,$$
and its basis consists of forms
$$1,u_2,\ldots,u_2^{k},du_2,u_2du_2,\ldots,u_2^{k-1}du_2.$$
\end{example}

\begin{example}
\label{ex34}
Let $n=3,m=4$. The ideal $J_{4/3}$ is generated by the coefficients of the series
$(1+u_2z^2+u_3z^3)^{4/3}$, starting from 5-th, hence its generators are
$${4\over 9}u_2u_3,\quad {2\over 9}u_3^2-{4\over 81}u_2^3.$$
The basis in the quotient is presented by
$1, u_2, u_2^2, u_2^3, u_3.$

The algebra $\Omega^{\bullet}(\mathcal{M}_{C})$ is defined by two more equations
$$u_2du_3+u_3du_2=0,\quad 2u_3du_3-{2\over 3}u_2^2du_2=0.$$
Therefore we have 5 one-forms 
$$\Omega^{1}(\mathcal{M}_{C})=<du_2, du_3, u_2du_2, u_2du_3, u_2^2du_2>$$
and one two-form $du_2du_3.$
\end{example}

\section{A comparison}

It turns out that the ideal $I_{m/n}$ constructed in \cite{dunkl2} is related to the moduli space $\mathcal{M}_{C}$.
Recall that $V$ denotes the standard $(n-1)$-dimensional representation of the symmetric group $S_n$ with the coordinates $x_i$
modulo the relation $\sum x_i=0$.

\begin{definition}
Let us introduce the elementary symmetric polynomials $u_i(x_1,\ldots,x_n)\in \mathbb{C}[V]^{S_n}$ by the formula
$$U(z)=\prod_{i=1}^{n}(1-zx_i)=1+\sum_{i=2}^{n}z^{i}u_{i}(x).$$
\end{definition}

\begin{lemma}
\label{newton}
Let us introduce the power sums
$$p_i=x_1^{i}+\ldots+x_n^{i}.$$
Then
$${d\over dz}\ln U(z)=-\sum_{i=0}^{\infty}p_{i+1}z^{i}.$$
\end{lemma}


\begin{theorem}
\label{dahalow}
For $c=m/n$ one have
$$(I_{c})^{S_n}=J_{m/n}.$$
\end{theorem}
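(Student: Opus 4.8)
The plan is to work throughout in the ring $B:=\mathbb{C}[V]^{S_n}=\mathbb{C}[u_2,\dots,u_n]$ and to exploit the explicit shape of the generators of both ideals. Write $c_k:=\mathrm{Coef}_k[F]$ for the coefficients of $F(z)=U(z)^{m/n}$; then $c_k\in B$, $c_0=1$, and by definition $J_{m/n}$ is the ideal of $B$ generated by $\{c_k:k>m\}$. Expanding $B_i(z)=F(z)/(1-zx_i)$ gives $\mathrm{Coef}_k[B_i]=\sum_{a=0}^k c_a x_i^{k-a}$, so in particular $f_i=\sum_{a=0}^m c_a x_i^{m-a}=\Phi(x_i)$, where $\Phi(X)=\sum_{a=0}^m c_aX^{m-a}\in B[X]$ is a single monic polynomial with symmetric coefficients. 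The computational heart of the argument is the identity
\[
\sum_{i=1}^n x_i^{d}f_i \;=\; -\frac{nd}{m}\,c_{m+d}\;-\;\sum_{a=m+1}^{m+d} c_a\,p_{m+d-a},\qquad d\ge 0,
\]
which I would obtain by combining the above expansion with the generating-function identity $\sum_i B_i(z)=nF(z)-\tfrac{n}{m}zF'(z)$ (itself a consequence of $\sum_i(1-zx_i)^{-1}=n-zU'/U$ and $F'/F=\tfrac{m}{n}U'/U$, i.e.\ of Lemma \ref{newton}) and reading off the coefficient of $z^{m+d}$. The point is that for every $d\ge1$ the right-hand side lies in $J_{m/n}$, since each term carries one of the generators $c_{m+1},\dots,c_{m+d}$; for $d=0$ it reads $\sum_i f_i=0$.

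The inclusion $J_{m/n}\subseteq (I_c)^{S_n}$ is then immediate. Each element $\sum_i x_i^{d}f_i$ is $S_n$-invariant and lies in $I_c$, hence in $(I_c)^{S_n}$; solving the displayed identity for $c_{m+d}$ expresses every generator of $J_{m/n}$ as a $B$-combination of the $\sum_i x_i^{d'}f_i$ with $d'\le d$, by induction on $d$ (the lower $c_{m+a}$ having already been placed in $I_c$). Thus all generators of $J_{m/n}$ are symmetric elements of $I_c$.

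The reverse inclusion $(I_c)^{S_n}\subseteq J_{m/n}$ is the main obstacle, because a general element of $I_c$ has the form $\sum_i h_i f_i$ with $h_i\in\mathbb{C}[V]$ completely arbitrary, so its $x_i$-dependence is not controlled. I would remove this with the Reynolds operator $R=\frac{1}{n!}\sum_{\sigma}\sigma$. Given a symmetric $P\in I_c$, write $P=\sum_i h_if_i$ and apply $R$; using $\sigma f_i=f_{\sigma(i)}$ one gets $P=R(P)=\sum_i H_i f_i$ with $H_i=\frac1{n!}\sum_\sigma\sigma(h_{\sigma^{-1}(i)})$ invariant under the stabilizer of $i$, i.e.\ symmetric in $\{x_j\}_{j\ne i}$. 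Since $\mathbb{C}[V]^{S_{n-1}}=B[x_i]/(\chi(x_i))$ with $\chi(T)=\prod_i(T-x_i)$, such an $H_i$ is a polynomial in $x_i$ over $B$, say $H_i=\gamma_i(x_i)$ with $\gamma_i\in B[X]$, so that $P=\sum_i\gamma_i(x_i)f_i$. The key simplification is that a further application of $R$ collapses the individual $\gamma_i$ to their average: since the coefficients of each $\gamma_i$ are $S_n$-fixed, $R\big(\gamma_i(x_i)f_i\big)=\frac1n\sum_k\gamma_i(x_k)f_k$, and because $P$ is already symmetric,
\[
P=R\Big(\sum_i\gamma_i(x_i)f_i\Big)=\sum_k\Gamma(x_k)f_k,\qquad \Gamma:=\frac1n\sum_i\gamma_i\in B[X].
\]
Writing $\Gamma=\sum_d\beta_dX^d$ with $\beta_d\in B$ and invoking the displayed identity once more gives $P=\sum_d\beta_d\big(\sum_k x_k^d f_k\big)\in J_{m/n}$, so $(I_c)^{S_n}=J_{m/n}$. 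The only point I expect to need genuine care is the bookkeeping of these Reynolds averages: checking that $H_i$ really is a polynomial in $x_i$ alone over $B$, and that the collapse to a single $\Gamma$ is an exact equality rather than one valid merely modulo lower-order terms.
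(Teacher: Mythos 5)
Your proof is correct, and its computational core is the same as the paper's: your identity for $\sum_i x_i^d f_i$ (obtained from $\sum_i B_i(z)=nF(z)-\tfrac{n}{m}zF'(z)$) is precisely the paper's expression $\sum_{i}f_{i}x_i^k=-\tfrac{n}{m}(m+k)v_{m+k}-\sum_{j=1}^{k-1}v_{m+j}p_{k-j}$ derived from the recursion (\ref{recursion}), and the inclusion $J_{m/n}\subseteq (I_c)^{S_n}$ is in both cases the resulting triangular change of variables. Where you genuinely differ is the reverse inclusion. The paper argues representation-theoretically: since the $f_i$ span a copy of the standard representation $V$ (Remark \ref{sum0}), the ideal generators of $(I_c)^{S_n}$ over $\mathbb{C}[V]^{S_n}$ are indexed by the copies of $V$ in the coinvariant algebra $\mathbb{C}[V]/\mathbb{C}[V]^{S_n}_{+}$, which is the regular representation; these $n-1$ copies are generated by the classes of $x_i^k$, $1\le k\le n-1$, so the elements $\sum_i x_i^k f_i$ suffice. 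You instead apply the Reynolds operator twice to an arbitrary symmetric $P=\sum_i h_i f_i$: once to replace each $h_i$ by an element of $\mathbb{C}[V]^{\mathrm{Stab}(i)}=B[x_i]$, and once more to collapse the $n$ resulting polynomials $\gamma_i$ to a single $\Gamma\in B[X]$ with $P=\sum_k\Gamma(x_k)f_k$; both averaging steps are exact equalities, as you suspected (the second because the coefficients of the $\gamma_i$ are $S_n$-fixed and $P$ is already invariant). Your route is more elementary and self-contained --- it avoids invoking the structure of the coinvariant algebra --- at the price of needing the key identity for all $d\ge 1$ rather than only $1\le d\le n-1$; the paper's route is shorter but leaves implicit the verification that the classes of $x_i^k$ exhaust the $V$-isotypic part. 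One small bookkeeping correction: in your displayed identity the summand with $a=m+d$ is $c_{m+d}\,p_0=n\,c_{m+d}$, so when you solve for $c_{m+d}$ the total coefficient is $-\tfrac{n(m+d)}{m}$ rather than $-\tfrac{nd}{m}$; it is still nonzero, so your induction is unaffected.
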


\begin{proof}
By construction, the set of generators of the ideal $I_{m/n}$ form a standard representation of $S_n$ (cf. Remark \ref{sum0}). 
Therefore to construct the generators of the symmetric part of $I_{m/n}$, we have to compute
$$[V\otimes \mathbb{C}[V]]^{S_n}/\mathbb{C}[V]^{S_n}_{+}=\mbox{\rm Hom}_{S_n}(V,\mathbb{C}[V]/\mathbb{C}[V]^{S_n}_{+}),$$
where $\mathbb{C}[V]^{S_n}_{+}$ denotes the ideal generated by the symmetric polynomials of positive degree.
It is well known that $\mathbb{C}[V]/\mathbb{C}[V]^{S_n}_{+}$  is isomorphic to the regular representation of $S_n$, so
there are $(n-1)$ different copies of $V$ in it, generated by $x_i^{k}$ for $1\le k\le n-1$. This means that the 
symmetric part of the ideal $I_{m/n}$ is generated by $(n-1)$ polynomials:
$$(I_{c})^{S_n}=<\sum_{i}x_i^{k}f_i|1\le k\le n-1>.$$
Let us compute these generators.

Recall that $$F(z)=\prod_{i=1}^{n}(1-zx_i)^{m/n}=\sum_{k=0}^{\infty} v_{k}(x)z^{k}.$$
By Lemma \ref{newton}
$${d\over dz}F(z)={m\over n}F(z){d\over dz}\ln U(z)=-{m\over n}F(z)\cdot \sum_{i=0}^{\infty}p_{i+1}z^{i},$$
Therefore
\begin{equation}
\label{recursion}
(k+1)v_{k+1}=-{m\over n}\sum_{i=0}^{k}v_{k-i}p_{i+1}.
\end{equation}

On the other hand,
$$\sum_{i}f_{i}x_i^{k}=\sum_{i}\mbox{\rm Coef}_{m}{x_i^{k}\over 1-zx_i}F(z)=\sum_{j=0}^{m}v_{m-j}p_{j+k}.$$
Therefore by (\ref{recursion})
$$\sum_{i}f_{i}x_i=\sum_{j=0}^{m}v_{m-j}p_{j+1}=-{n\over m}(m+1)v_{m+1},$$
$$\sum_{i}f_{i}x_i^2=\sum_{j=0}^{m}v_{m-j}p_{j+2}=-{n\over m}(m+2)v_{m+2}-v_{m+1}p_1,\ldots$$
$$\sum_{i}f_{i}x_i^k=\sum_{j=0}^{m}v_{m-j}p_{j+k}=-{n\over m}(m+k)v_{m+k}-\sum_{j=1}^{k-1}v_{m+j}p_{k-j},$$
and $v_{m+1},\ldots, v_{m+n-1}$ can be obtained from $\sum_{i}f_{i}x_i^k$ with a triangular change of variables.

It rests to note that by Lemma \ref{utov} the polynomials $v_{m+1},\ldots, v_{m+n-1}$ generate the ideal $J_{m/n}$.




\end{proof}

\begin{remark}
\label{sum0}
If we plug in $k=m-1$ in (\ref{recursion}), we will get\\
$mv_{m}=-{m\over n}\sum_{i=0}^{m-1}v_{k-i}p_{i+1},$
hence
$nv_{m}+\sum_{i=0}^{m-1}v_{k-i}p_{i+1}=0,$
and
$$\sum_{j}f_j=\sum_{j}\mbox{\rm Coef}_{m}[(1-zx_j)^{-1}F(z)]=nv_{m}+\sum_{i=1}^{m}v_{m-i}p_{i}=0.$$
\end{remark}

\begin{corollary}
\label{nextcoef}
$$\mbox{\rm Coef}_{k}[B_i(z)]\in I_{m/n}\quad \mbox{\rm for}\quad k\ge m.$$
\end{corollary}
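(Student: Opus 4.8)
The plan is to reduce everything to two facts: that $f_i=\mbox{\rm Coef}_m[B_i(z)]\in I_{m/n}$ by the very definition of the ideal, and that the remaining coefficients $v_j=\mbox{\rm Coef}_j[F(z)]$ lie in $I_{m/n}$ for every $j>m$. The bridge between the two is the defining identity $(1-zx_i)B_i(z)=F(z)$. Comparing coefficients of $z^k$ gives the recursion
$$\mbox{\rm Coef}_k[B_i(z)]=x_i\,\mbox{\rm Coef}_{k-1}[B_i(z)]+v_k,$$
which I would iterate down to $k=m$ to obtain the closed form
$$\mbox{\rm Coef}_k[B_i(z)]=x_i^{\,k-m}f_i+\sum_{j=m+1}^{k}x_i^{\,k-j}v_j.$$
Since $I_{m/n}$ is an ideal, the first term is manifestly in $I_{m/n}$, so the corollary follows once each $v_j$ with $j>m$ is shown to belong to $I_{m/n}$.

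First I would record that $v_j\in I_{m/n}$ for $j>m$ is exactly what the computation in the proof of Theorem \ref{dahalow} delivers. There the identity
$$\sum_i f_i x_i^k=-{n\over m}(m+k)v_{m+k}-\sum_{s=1}^{k-1}v_{m+s}p_{k-s}$$
is derived from the recursion (\ref{recursion}), and it is valid for all $k\ge 1$, not merely in the range $1\le k\le n-1$ needed for Theorem \ref{dahalow}. The left-hand side lies in $I_{m/n}$ for every $k$ because each $f_i$ does and $I_{m/n}$ is an ideal. I would then argue by induction on $k$: for $k=1$ the correction sum is empty, so $v_{m+1}$ is a nonzero multiple of $\sum_i f_i x_i\in I_{m/n}$; assuming $v_{m+1},\dots,v_{m+k-1}\in I_{m/n}$, the sum $\sum_{s=1}^{k-1}v_{m+s}p_{k-s}$ also lies in $I_{m/n}$, and since the scalar ${n\over m}(m+k)$ is nonzero, solving for $v_{m+k}$ puts it in $I_{m/n}$ as well.

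Combining the two steps finishes the proof. The only genuine subtlety, and the step I would check most carefully, is that the membership $v_j\in I_{m/n}$ must be established for arbitrarily large $j$, well beyond the finitely many generators $v_{m+1},\dots,v_{m+n-1}$ of $J_{m/n}=(I_{m/n})^{S_n}$ produced in Theorem \ref{dahalow}. The induction above handles this cleanly precisely because the correction term at stage $k$ only involves the $v_{m+s}$ with $s<k$, which are already known to lie in the ideal, so no new generators are needed. One could instead try to invoke the Dunkl operator relation of Corollary \ref{db}, but that propagates membership downward in $k$ and so does not by itself climb above $f_i=\mbox{\rm Coef}_m[B_i(z)]$.
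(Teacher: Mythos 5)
Your proof is correct and follows essentially the same route as the paper: your closed form $\mbox{\rm Coef}_{k}[B_i(z)]=x_i^{k-m}f_i+\sum_{j=m+1}^{k}x_i^{k-j}v_j$ is exactly the paper's splitting of $\sum_{a=0}^{k}x_i^{a}v_{k-a}$ into the tail $x_i^{k-m}\mbox{\rm Coef}_{m}[B_i(z)]$ plus the terms involving $v_j$ with $j>m$. Your additional induction establishing $v_j\in I_{m/n}$ for all $j>m$ is a more self-contained justification of what the paper simply cites from Theorem \ref{dahalow}, where every such $v_j$ lies in $J_{m/n}=(I_c)^{S_n}$ because the $v_j$ are by definition generators of $J_{m/n}$ and, by Lemma \ref{utov}, the finitely many $v_{m+1},\ldots,v_{m+n-1}$ already generate that ideal.
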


\begin{proof}
We have
$$\mbox{\rm Coef}_{k}[B_i(z)]=\mbox{\rm Coef}_{k}[{F(z)\over 1-zx_i}]=\sum_{a=0}^{k}x_i^{a}v_{k-a}=$$
$$\sum_{a=0}^{k-m-1}x_i^{a}v_{k-a}+\sum_{a=k-m}^{k}x_i^{a}v_{k-a}.$$
By Theorem \ref{dahalow} the first sum belongs to $I_{m/n}$, and the second sum can be rewritten as
$$\sum_{a=0}^{m}x_i^{k-m+a}v_{m-a}=x_i^{k-m}\sum_{a=0}^{m}x_i^{a}v_{m-a}=x_i^{k-m}\mbox{\rm Coef}_{m}[B_i(z)]\in I_{m/n}.$$
\end{proof}

\begin{lemma}
\label{exc}
Let $h(x_1,\ldots,x_n)$ be a symmetric function in $x_2,\ldots, x_n$.
There exist functions $\phi_1,\ldots,\phi_{n-1},\rho_1,\ldots,\rho_{n-1}\in \mathbb{C}[V]^{S_n}$ such that
\begin{equation}
\label{decomp}
h\cdot f_1=\sum_{k>m}(\phi_{j}{\partial v_{k}\over \partial x_1}+\rho_{j}v_{k}).
\end{equation}
\end{lemma}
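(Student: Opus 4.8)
The plan is to reduce the statement to the case $h=x_1^{j}$ and then to a single family of membership relations, the last of which requires finite–dimensionality in an essential way. Throughout write $A=\mathbb{C}[V]^{S_n}$, let $W\subseteq\mathbb{C}[V]$ be the $A$–submodule generated by $\{\,v_k,\ \partial v_k/\partial x_1 : k>m\,\}$, and recall from the proof of Theorem \ref{dahalow} the series $F(z)=\sum_k v_k z^k$ together with $B_1(z)=F(z)/(1-zx_1)=\sum_k T_k z^k$, so that $T_k=\sum_{a=0}^{k}x_1^{a}v_{k-a}$. Differentiating $F=\prod_i(1-zx_i)^{m/n}$ in $x_1$ gives $\partial F/\partial x_1=-\tfrac{m}{n}zB_1(z)$, whence the two identities I will use repeatedly,
\[
\frac{\partial v_k}{\partial x_1}=-\frac{m}{n}\,T_{k-1},\qquad x_1T_k=T_{k+1}-v_{k+1}.
\]
In particular $f_1=T_m=-\tfrac{n}{m}\,\partial v_{m+1}/\partial x_1$, which settles the case $h=1$.

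First I would reduce to monomials in $x_1$. The elements $f_1$, $v_k$ and $\partial v_k/\partial x_1$ all lie in $\mathbb{C}[V]^{S_{n-1}}$ (functions symmetric in $x_2,\dots,x_n$), which is free over $A$ with basis $1,x_1,\dots,x_1^{\,n-1}$; hence any admissible $h$ is an $A$–combination of these powers, and since $W$ is an $A$–module it suffices to prove $x_1^{\,j}f_1\in W$ for $0\le j\le n-1$. Iterating $x_1T_k=T_{k+1}-v_{k+1}$ from $f_1=T_m$ gives
\[
x_1^{\,j}f_1=T_{m+j}-\sum_{i=1}^{j}x_1^{\,j-i}v_{m+i}=-\frac{n}{m}\,\frac{\partial v_{m+j+1}}{\partial x_1}-\sum_{i=1}^{j}x_1^{\,j-i}v_{m+i}.
\]
The first term already lies in $W$, so everything collapses to the single claim
\[
x_1^{\,l}v_k\in W\qquad(k>m,\ l\ge 0),
\]
and, using $x_1^{\,n}=-(u_2x_1^{\,n-2}+\dots+u_n)$ from $\prod_i(x_1-x_i)=0$, I may assume $0\le l\le n-1$.

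This last claim is the heart of the matter and is where the main obstacle lies. Both displayed identities are consequences of the single differential equation for $F$, and one checks that they are mutually circular: expressing $x_1v_k$ through $v_{k+1},\ \partial v_{k+1}/\partial x_1,\dots$ by means of them alone only returns the tautology $x_1v_k=x_1v_k$. The ingredient that must be injected is finite–dimensionality, in the form $v_l\in J_{m/n}$ for every $l>m$ (Theorem \ref{dahalow}); indeed the claim is precisely the inclusion $J_{m/n}\cdot\mathbb{C}[V]^{S_{n-1}}\subseteq W$. I would prove it by induction on the $x_1$–degree $l$, exploiting that $T_{k+l}=\sum_b x_1^{b}v_{k+l-b}\in W$ for $k+l\ge m$: in this expansion the target $x_1^{\,l}v_k$ is isolated, the terms with $b<l$ have $v$–index $>k>m$ and are treated by induction, while the terms with $b>l$ have smaller $v$–index and must be rewritten by substituting the generator expansions of the high $v_{k+l-b}$ (those still of index $>m$) in terms of $v_{m+1},\dots,v_{m+n-1}$, together with the auxiliary relation $\partial v_{k+1}/\partial x_1=x_1\,\partial v_k/\partial x_1-\tfrac{m}{n}v_k$ and the analogous first–order identities for the $u_j$. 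Controlling these off–diagonal terms is the only genuine difficulty, and it is exactly finite–dimensionality that closes the induction; the case $n=3,\ m=4$ is a faithful model, where one finds the non-obvious identity
\[
x_1v_5=u_2\,\frac{\partial v_5}{\partial x_1}-2v_6+3\,\frac{\partial v_7}{\partial x_1},
\]
in which the generator relation $v_7=-\tfrac{1}{3}u_2v_5$ is precisely what breaks the circularity. Once $x_1^{\,l}v_k\in W$ is established, reassembling the expansions above yields $h\cdot f_1\in W$, which is the assertion of the lemma.
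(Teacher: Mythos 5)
Your reduction to $h=x_1^{\,j}$ (via freeness of $\mathbb{C}[V]^{S_{n-1}}$ over $\mathbb{C}[V]^{S_n}$) and your two identities $\partial v_{k+1}/\partial x_1=-\tfrac{m}{n}T_k$ and $x_1T_k=T_{k+1}-v_{k+1}$ are exactly the paper's route (the paper packages them as $x_1\,\partial v_k/\partial x_1=\partial v_{k+1}/\partial x_1+\tfrac{m}{n}v_k$), and you have correctly located where the real content of the lemma sits: after the formal manipulations everything reduces to the single claim $x_1^{\,l}v_k\in W$ for $k>m$. The problem is that your proof of that claim does not close. You propose to induct on $l$ using the expansion of $T_{k+l}$, handling the terms $x_1^{b}v_{k+l-b}$ with $b<l$ by induction and ``rewriting'' the terms with $b>l$. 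But that low-index tail satisfies $\sum_{b>l}x_1^{b}v_{k+l-b}=x_1^{l}(T_k-v_k)$, so the isolated target term $x_1^{l}v_k$ cancels identically and the expansion of $T_{k+l}$ collapses to the very tautology you flagged two sentences earlier as ``mutually circular.'' The instruction to ``substitute the generator expansions of the high $v$'s together with the auxiliary relation'' is not an argument: every relation you invoke there is again a formal consequence of $(1-zx_1)\partial F/\partial x_1=-\tfrac{m}{n}zF$ and $(1-zx_1)B_1=F$, which you have just demonstrated to be insufficient. Some genuinely new input --- e.g.\ the relation $x_1^{n}=-(u_2x_1^{n-2}+\cdots+u_n)$, equivalently the fact that $\prod_{i\ge 2}(1-zx_i)$ has degree $n-1$ in $z$, together with the finite generation of $J_{m/n}$ by $v_{m+1},\dots,v_{m+n-1}$ --- must actually be deployed to break the circle, and it never is. So the key step is asserted, not proved.

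Two further remarks. First, your illustrative identity for $n=3$, $m=4$ appears to be off: a direct computation (with $x_1,x_2,x_3$ treated as independent before restricting to $\sum x_i=0$) gives $x_1v_5=u_2\,\partial v_5/\partial x_1+3\,\partial v_7/\partial x_1$ with no $-2v_6$ term; this is harmless for the membership claim since $v_6\in W$ anyway, but it means the one concrete piece of evidence for your induction is not a reliable model of how the ``generator relation $v_7=-\tfrac13u_2v_5$'' enters. Second, in fairness, the paper's own proof is exactly as terse at this point (``one can express $x_1^{k}f_1$ via $v_k$ and $\partial v_k/\partial x_1$ with $k>m$''), so you have in fact diagnosed the gap more honestly than the source; but a diagnosis is not a proof, and the lemma remains unestablished by your argument.
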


\begin{proof}
One can present $h$ as a linear combination of some powers of $x_1$ multiplied by some symmetric polynomials in $x_1,\ldots,x_n$.
Therefore it is sufficient to prove (\ref{decomp}) for $h=x_1^{k}$.

Since $$(1-zx_1){\partial F(z)\over \partial x_1}=-{m\over n}zF(z),$$ one has
$$x_1{\partial v_k\over \partial x_1}={\partial v_{k+1}\over \partial x_1}+{m\over n}v_{k}.$$
Using this equation, one can express $$x_1^{k}f_1=-{n\over m}x_1^{k}{\partial v_{m+1}\over \partial x_1}$$
via $v_k$ and ${\partial v_{k}\over \partial x_1}$ with $k>m$.
\end{proof}

\begin{theorem}
\label{t1}
$$\mbox{\rm Hom}_{S_n}(\Lambda^{k}V,L_{m/n})\simeq \Omega^{k}(\mathcal{M}_{m,n}).$$
\end{theorem}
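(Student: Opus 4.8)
The plan is to realize both sides as quotients of one and the same free module and then to match the two submodules of relations.

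First I would use the short exact sequence $0\to I_{m/n}\to M_{m/n}\to L_{m/n}\to 0$ of $S_n$-modules. Since $\mathbb{C}[S_n]$ is semisimple, the functor $\mathrm{Hom}_{S_n}(\Lambda^k V,-)$ is exact, so $\mathrm{Hom}_{S_n}(\Lambda^k V,L_{m/n})=\mathrm{Hom}_{S_n}(\Lambda^k V,M_{m/n})/\mathrm{Hom}_{S_n}(\Lambda^k V,I_{m/n})$. By the isomorphism recalled in the introduction, $\mathrm{Hom}_{S_n}(\Lambda^k V,M_{m/n})\simeq\Omega^k(V//S^n)$, the free $\mathbb{C}[V]^{S_n}=\mathbb{C}[u_2,\dots,u_n]$-module on the forms $du_{i_1}\wedge\cdots\wedge du_{i_k}$; this is exactly the module that, after imposing relations, yields $\Omega^k(\mathcal{M}_{m,n})$. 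Under this identification $\mathrm{Hom}_{S_n}(\Lambda^k V,I_{m/n})$ becomes $(I_{m/n}\,\Omega^k_V)^{S_n}$, where $\Omega^k_V=\Lambda^k V^*\otimes\mathbb{C}[V]$, i.e. the space of invariant $k$-forms on $V$ all of whose coefficients in the basis $dx_i$ lie in $I_{m/n}$. On the other hand, by the presentation of $\Omega^\bullet(\mathcal{M}_C)$ obtained in Section 3 together with Lemma \ref{utov}, one has $\Omega^k(\mathcal{M}_{m,n})=\Omega^k(V//S^n)/N_k$, where $N_k=\langle\,\psi\,\omega+d\psi\wedge\omega'\;:\;\psi\in J_{m/n}\,\rangle$. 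Thus the theorem is equivalent to the equality of submodules $\mathrm{Hom}_{S_n}(\Lambda^k V,I_{m/n})=N_k$, which I would prove by the two inclusions.

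For $N_k\subseteq\mathrm{Hom}_{S_n}(\Lambda^k V,I_{m/n})$ the point is that $J_{m/n}$ and its differentials already have coefficients in $I_{m/n}$. Indeed $J_{m/n}=(I_{m/n})^{S_n}\subseteq I_{m/n}$ by Theorem \ref{dahalow}, so every term $\psi\,\omega$ has all coefficients in $I_{m/n}$. For the term $d\psi\wedge\omega'$ it suffices to treat the generators $\psi=v_k$ with $k>m$: the relation $(1-zx_i)\,\partial F/\partial x_i=-\tfrac{m}{n}zF(z)$ gives $\partial F/\partial x_i=-\tfrac{m}{n}zB_i(z)$, whence $\partial v_k/\partial x_i=-\tfrac{m}{n}\,\mathrm{Coef}_{k-1}[B_i(z)]$, which lies in $I_{m/n}$ for $k-1\ge m$ by Corollary \ref{nextcoef}. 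Hence $d\psi$, and therefore $d\psi\wedge\omega'$, has coefficients in $I_{m/n}$. This already produces a surjection $\Omega^k(\mathcal{M}_{m,n})\to\mathrm{Hom}_{S_n}(\Lambda^k V,L_{m/n})$.

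The reverse inclusion is where the real work lies, and I expect it to be the \emph{main obstacle}. Here I would use that $I_{m/n}$ is generated by the polynomials $f_i$, which span a copy of the standard representation $V$. The map $\mathbb{C}[V]\otimes V\to I_{m/n}$, $g\otimes e_i\mapsto gf_i$, is surjective and stays surjective after applying the exact functor $(\Lambda^k V^*\otimes-)^{S_n}$, so every invariant form with coefficients in $I_{m/n}$ is an $S_n$-symmetrization of terms of the shape $f_1\cdot\beta$, where $\beta$ is a form covariant for the stabilizer $S_{n-1}$ of the index $1$. The key tool is then Lemma \ref{exc}, which rewrites $h\,f_1$, for $h$ symmetric in $x_2,\dots,x_n$, as $\sum_{k>m}(\phi_j\,\partial v_k/\partial x_1+\rho_j\,v_k)$ with $\phi_j,\rho_j\in\mathbb{C}[V]^{S_n}$. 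Since all $v_k$ with $k>m$ lie in $J_{m/n}$ by definition, symmetrizing this identity over $S_n$ should assemble the $\partial v_k/\partial x_1$–terms into invariant forms $dv_k\wedge(\cdots)$ and the $v_k$–terms into $v_k\cdot(\cdots)$, both of which lie in $N_k$. This would give $\mathrm{Hom}_{S_n}(\Lambda^k V,I_{m/n})\subseteq N_k$, and hence the desired isomorphism.

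The delicate part of the last step is precisely the passage from the componentwise identity of Lemma \ref{exc}, which concerns a single index and the subgroup $S_{n-1}$, to an honest identity between $S_n$-invariant differential forms: one must organize the symmetrization through Frobenius reciprocity (or an explicit averaging over cosets of $S_{n-1}$) and verify that the wedge products reassemble correctly into the two types of generators of $N_k$. As a consistency check, and as a possible shortcut that upgrades the surjection of the preceding paragraph to an isomorphism, I would compare the $q$-graded Hilbert series of the two sides, using the formula for $H_{m,n}(q)$ from Section 3 for the part $k=0$ and its refinement for higher $k$.
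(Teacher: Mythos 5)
Your proposal is correct and follows essentially the same route as the paper: both reduce the statement to identifying $\mathrm{Hom}_{S_n}(\Lambda^{k}V,I_{m/n})$ with the defining relations of $\Omega^{k}(\mathcal{M}_{m,n})$ inside $\Omega^{k}(V//S_n)$, prove one inclusion via Theorem \ref{dahalow} and Corollary \ref{nextcoef} applied to $\partial v_k/\partial x_i=-\tfrac{m}{n}\mathrm{Coef}_{k-1}[B_i(z)]$, and prove the reverse by writing invariant forms with coefficients in $I_{m/n}$ as symmetrizations of $f_1\cdot\beta$ terms and applying Lemma \ref{exc}. The symmetrization step you flag as delicate is exactly the step the paper also treats briefly (via the determinants with entries $f_{\alpha_i}h_i$), so your level of rigor matches the original.
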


Remark that 
$$\mbox{\rm Hom}_{S_n}(\Lambda^{k}V,M_{m/n})=\mbox{\rm Hom}_{S_n}(\Lambda^{k}V,\mathbb{C}[V])=\Omega^{k}(V)^{S_n}\simeq \Omega^{k}(V//S_n),$$
where the last isomorphism follows  from the results of \cite{solomon}: every $S_n$-invariant differential form on $V$ can be obtained as a pullback of a form on $V//S_n$. To fix the notation, we give the following

\begin{definition}
We introduce a map
$$\lambda: \Omega^{k}(V//S_n)\rightarrow \mbox{\rm Hom}_{S_n}(\Lambda^{k}V,\mathbb{C}[V])$$
by the formula
$$\lambda_{i_1,\ldots,i_k}(\omega)=\pi^{*}\omega({\partial \over \partial x_{i_1}},\ldots,{\partial \over \partial x_{i_k}}),$$
where $\pi:V\rightarrow V//S_n$ denotes the natural projection.
\end{definition}

\begin{proposition}
The following equation holds:
$$\lambda_{i_1,\ldots,i_s}(du_{k_1}\wedge \ldots \wedge du_{k_s})=\left|{\partial u_{k_a}\over \partial x_{i_b}}\right|.$$
\end{proposition}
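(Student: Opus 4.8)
The plan is to reduce the statement to the standard determinant formula for the value of a wedge of $1$-forms on a tuple of tangent vectors. First I would observe that the elementary symmetric polynomials $u_k$, which by definition generate $\mathbb{C}[V]^{S_n}$, are exactly the pullbacks under $\pi$ of the coordinate functions on $V//S_n=\mathrm{Spec}\,\mathbb{C}[V]^{S_n}$. Since pullback commutes with the de Rham differential and with the wedge product, this gives
$$\pi^{*}(du_{k_1}\wedge\cdots\wedge du_{k_s})=d(u_{k_1}\circ\pi)\wedge\cdots\wedge d(u_{k_s}\circ\pi)=du_{k_1}\wedge\cdots\wedge du_{k_s},$$
where on the right each $u_{k_a}$ is now regarded as the corresponding symmetric polynomial on $V$, and the expression is an honest $s$-form on $V$.

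Next I would expand each factor in the coordinate basis, $du_{k_a}=\sum_j \frac{\partial u_{k_a}}{\partial x_j}\,dx_j$, so that pairing with a coordinate vector field yields $du_{k_a}\!\left(\frac{\partial}{\partial x_{i_b}}\right)=\frac{\partial u_{k_a}}{\partial x_{i_b}}$. The only remaining ingredient is the elementary multilinear-algebra identity that, for any $1$-forms $\alpha_1,\ldots,\alpha_s$ and tangent vectors $X_1,\ldots,X_s$,
$$(\alpha_1\wedge\cdots\wedge\alpha_s)(X_1,\ldots,X_s)=\det\left(\alpha_a(X_b)\right)_{a,b=1}^{s},$$
which follows directly from the definition of $\wedge$ as the antisymmetrization of the tensor product. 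Applying this with $\alpha_a=du_{k_a}$ and $X_b=\frac{\partial}{\partial x_{i_b}}$, and using the definition of $\lambda_{i_1,\ldots,i_s}$ as evaluation of $\pi^{*}\omega$ on the coordinate fields, I obtain
$$\lambda_{i_1,\ldots,i_s}(du_{k_1}\wedge\cdots\wedge du_{k_s})=\det\left(\frac{\partial u_{k_a}}{\partial x_{i_b}}\right)=\left|\frac{\partial u_{k_a}}{\partial x_{i_b}}\right|,$$
which is the claimed formula.

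There is essentially no genuine obstacle here: the statement is a book-keeping identity that unwinds the definition of $\lambda$. The one point deserving a word of care is the normalization convention in the antisymmetrization defining $\wedge$, since dividing by $s!$ would introduce a factorial prefactor in the determinant formula. I would therefore fix the convention so that the displayed determinant identity holds with no prefactor, which is the convention implicitly used in the definition of $\lambda$, and under which the proposition follows verbatim from the three steps above.
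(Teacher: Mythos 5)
Your argument is correct and is exactly the intended justification: the paper states this proposition without proof, treating it as an immediate unwinding of the definition of $\lambda$ (pullback of the coordinate forms $du_k$ to the symmetric polynomials on $V$, followed by the standard determinant formula for evaluating a wedge of $1$-forms on coordinate vector fields). Your remark about fixing the normalization of $\wedge$ so that no $s!$ appears is the right convention to match the paper's usage.
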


{\bf Proof of Theorem \ref{t1}}. Let us check that the map $\lambda$ sends the defining equations of $\Omega^{\bullet}(\mathcal{M}_C)$ inside the ideal $I_{m/n}$.

Recall that
$$F(z)=\prod_{i=1}^{n}(1-zx_i)^{m/n}=\sum_{k=0}^{\infty} v_{k}(u)z^{k},$$
and the  defining ideal of $\Omega^{\bullet}(\mathcal{M}_C)$ is generated by the equations  $$v_k(u)=dv_k(u)=0\quad \mbox{\rm for}\quad k>m$$
We checked in Theorem \ref{dahalow} that $\lambda(v_k)\in I_{m/n}$, let us check that
$\lambda(dv_k)\in I_{m/n}.$

Remark that
$$\lambda_i(dv_k)={\partial v_k\over \partial x_i}=
\mbox{\rm Coef}_{k}{\partial F(z)\over \partial x_i} =-{m\over n}\mbox{\rm Coef}_{k-1}[{F(z)\over 1-zx_i}].$$
This coefficient belongs to $I_{m/n}$ by Corollary \ref{nextcoef}.

Similarly to the proof of Theorem \ref{dahalow}
one can check that every element of $\mbox{\rm Hom}_{S_n}(\Lambda^{k}V,I_{m/n})$ can be presented as a combination of the determinants
of the form
$$\left|
\begin{matrix}
f_{\alpha_1}h_1 & f_{\alpha_2}h_2 & \ldots & f_{\alpha_k}h_k\\
{\partial u_{\beta_1}\over \partial x_{\alpha_1}} & {\partial u_{\beta_1}\over \partial x_{\alpha_2}} & \ldots & {\partial u_{\beta_1}\over \partial x_{\alpha_k}}\\
\vdots & \vdots  & \ddots & \vdots\\
{\partial u_{\beta_{k-1}}\over \partial x_{\alpha_1}} & {\partial u_{\beta_{k-1}}\over \partial x_{\alpha_2}} & \ldots & {\partial u_{\beta_{k-1}}\over \partial x_{\alpha_k}}\\
\end{matrix}
\right|$$
with symmetric coefficients, where $h_i$ are symmetric in all variables but $x_{\alpha_i}$.
By Lemma \ref{exc}, we can present (modulo $J_{m/n}$) every such determinant as a combination of the expressions
$$\lambda_{\alpha_1,\ldots,\alpha_k}(dv_{s}\wedge du_{\beta_1}\wedge \ldots \wedge du_{\beta_{k-1}}),\quad s>m $$
with symmetric coefficients. It rests to note that the form $$dv_{s}\wedge du_{\beta_1}\wedge \ldots \wedge du_{\beta_{k-1}}=dv_{s}\wedge \omega$$
belongs to the defining ideal of $\Omega^{\bullet}(\mathcal{M}_C)$.
$\square$

\section{Action of Dunkl operators}

We start with the following reformulation of the Corollary \ref{db}.

\begin{lemma}
\label{Dv}
$$D_{s}\lambda_{i}(dv_k)=\delta_{is}(k-1-m)\lambda_{i}(dv_{k-1}).$$
\end{lemma}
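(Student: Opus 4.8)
The plan is to reduce this to the computation already packaged in Corollary \ref{db}. Recall from that corollary that the Dunkl operators act on the coefficients of $B_i(z)=F(z)/(1-zx_i)$ by
$$D_s\,\mbox{\rm Coef}_k[B_i(z)]=\delta_{is}(k-m)\,\mbox{\rm Coef}_{k-1}[B_i(z)].$$
The key observation I would make at the outset is that $\lambda_i(dv_k)=\partial v_k/\partial x_i$ is, up to a constant, exactly a coefficient of $B_i(z)$. Indeed, differentiating $F(z)=\prod(1-zx_j)^{m/n}$ with respect to $x_i$ gives $(1-zx_i)\,\partial F/\partial x_i=-\tfrac{m}{n}zF(z)$, so that $\partial F/\partial x_i=-\tfrac{m}{n}z\,B_i(z)$. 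Taking the coefficient of $z^k$ yields
$$\lambda_i(dv_k)=\frac{\partial v_k}{\partial x_i}=\mbox{\rm Coef}_k\!\left[\frac{\partial F}{\partial x_i}\right]=-\frac{m}{n}\,\mbox{\rm Coef}_{k-1}[B_i(z)],$$
which is the same identity already used in the proof of Theorem \ref{t1}.

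With this identification in hand, the proof is a one-line substitution. First I would write $\lambda_i(dv_k)=-\tfrac{m}{n}\,\mbox{\rm Coef}_{k-1}[B_i(z)]$, then apply Corollary \ref{db} to the index $k-1$:
$$D_s\,\mbox{\rm Coef}_{k-1}[B_i(z)]=\delta_{is}(k-1-m)\,\mbox{\rm Coef}_{k-2}[B_i(z)].$$
Pulling the constant $-\tfrac{m}{n}$ through the (linear) Dunkl operator and re-expressing $\mbox{\rm Coef}_{k-2}[B_i(z)]=-\tfrac{n}{m}\lambda_i(dv_{k-1})$ via the same identity at level $k-1$ gives
$$D_s\lambda_i(dv_k)=-\frac{m}{n}\,\delta_{is}(k-1-m)\,\mbox{\rm Coef}_{k-2}[B_i(z)]=\delta_{is}(k-1-m)\,\lambda_i(dv_{k-1}),$$
which is exactly the claimed formula. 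The two factors of $-\tfrac{m}{n}$ and $-\tfrac{n}{m}$ cancel, so the only surviving constant is the eigenvalue $(k-1-m)$.

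There is essentially no obstacle here: the content is entirely contained in Corollary \ref{db}, and the lemma is just a change of notation expressing that corollary in terms of $\lambda_i(dv_k)$ rather than $\mbox{\rm Coef}_k[B_i(z)]$. The only point requiring minor care is bookkeeping of the index shift—verifying that the eigenvalue at level $k$ for $\lambda_i(dv_k)$ is $(k-1-m)$ rather than $(k-m)$, which arises precisely because $\lambda_i(dv_k)$ corresponds to $\mbox{\rm Coef}_{k-1}$ rather than $\mbox{\rm Coef}_k$. I would double-check that the Kronecker delta $\delta_{is}$ is preserved throughout, which it is since the Dunkl operator $D_s$ only acts nontrivially on $B_i(z)$ when $s=i$.
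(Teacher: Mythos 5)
Your proof is correct and follows essentially the same route as the paper: both identify $\lambda_i(dv_k)=-\tfrac{m}{n}\mathrm{Coef}_{k-1}[B_i(z)]$ and then apply Corollary \ref{db} at index $k-1$, with the constants cancelling. Your version just spells out the derivation of that identity a bit more explicitly.
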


\begin{proof}
Remark that 
$$\lambda_{i}(dv_k)={\partial v_k\over \partial x_i}=-{m\over n}\mbox{\rm Coef}_{k-1}[B_i(z)],$$
by Corollary \ref{db} 
$$D_{s}\mbox{\rm Coef}_{k-1}[B_i(z)]=\delta_{is}(k-1-m)\mbox{\rm Coef}_{k-2}[B_i(z)].$$
hence
$$D_{s}\lambda_{i}(dv_k)=\delta_{is}(k-1-m)\lambda_{i}(dv_{k-1}).$$
\end{proof}

\begin{lemma}(\cite{dunkl2})
The following product rule holds for Dunkl operators:
$$D_{i}(fg)=D_i(f)g+D_i(g)f+{m\over n}\sum_{i\neq j}{(f-s_{ij}(f))(g-s_{ij}(g))\over x_i-x_j}.$$
\end{lemma}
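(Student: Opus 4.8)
The plan is to prove the identity by a direct expansion, exploiting the fact that the Dunkl operator splits into a genuine derivation plus a divided-difference part. Write $D_i=\partial_i+T_i$, where $\partial_i=\frac{\partial}{\partial x_i}$ and $T_i=-\frac{m}{n}\sum_{j\neq i}\frac{s_{ij}-1}{x_i-x_j}$. The derivative $\partial_i$ obeys the ordinary Leibniz rule, so its contribution to $D_i(fg)$ is exactly $(\partial_i f)g+f(\partial_i g)$ and produces no correction term. Hence the entire defect $D_i(fg)-D_i(f)g-D_i(g)f$ is contributed by $T_i$, and the problem reduces to understanding $T_i(fg)-T_i(f)g-T_i(g)f$ one summand in $j$ at a time.

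First I would use that each transposition is an algebra automorphism, so $s_{ij}(fg)=s_{ij}(f)\,s_{ij}(g)$. Abbreviating $f'=s_{ij}(f)$ and $g'=s_{ij}(g)$, the numerator of the $j$-th summand of $T_i(fg)-T_i(f)g-T_i(g)f$ (before dividing by $x_i-x_j$ and before the overall scalar) is
$$(f'g'-fg)-(f'-f)g-(g'-g)f.$$
Expanding the products and collecting terms collapses this expression to $(f'-f)(g'-g)=(f-s_{ij}f)(g-s_{ij}g)$, which is precisely the bilinear factor appearing in the statement. Summing over $j\neq i$ then shows that the defect equals the divided-difference sum $\sum_{j\neq i}\frac{(f-s_{ij}f)(g-s_{ij}g)}{x_i-x_j}$ multiplied by the coefficient of the divided-difference part of $D_i$, which is the content of the lemma.

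The only delicate point is bookkeeping. One must track the overall sign carried by the coefficient $-\frac{m}{n}$ in the definition of $D_i$ as it feeds through to the correction term, and one should confirm that this term is a genuine element of $\mathbb{C}[V]$ rather than a formal quotient. The latter is automatic: since $f-s_{ij}f$ is anti-invariant under $s_{ij}$ it vanishes on the hyperplane $x_i=x_j$ and is therefore divisible by $x_i-x_j$, and the same holds for $g-s_{ij}g$, so the numerator $(f-s_{ij}f)(g-s_{ij}g)$ is divisible by $(x_i-x_j)^2$ and the quotient remains polynomial. This is consistent with the fact, already built into the definition, that $D_i$ preserves $\mathbb{C}[V]$. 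I expect no substantive obstacle beyond this careful expansion; conceptually, the lemma merely records that the failure of $D_i$ to be a derivation is exactly the symmetric bilinear defect measured by the divided differences of $f$ and $g$.
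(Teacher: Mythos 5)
Your strategy is the right one, and the paper itself offers no proof to compare against (the lemma is simply cited from Dunkl), so a direct expansion like yours is exactly what is called for. The decomposition $D_i=\partial_i+T_i$, the observation that $\partial_i$ contributes no defect, the use of $s_{ij}(fg)=s_{ij}(f)\,s_{ij}(g)$, the identity $(f'g'-fg)-(f'-f)g-(g'-g)f=(f'-f)(g'-g)$, and the divisibility remark guaranteeing polynomiality are all correct.

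However, the one step you explicitly defer to ``bookkeeping'' --- the overall sign --- is precisely where your argument, carried to completion, fails to reproduce the statement as printed. With the paper's definition $D_i=\partial_i-\frac{m}{n}\sum_{j\neq i}\frac{s_{ij}-1}{x_i-x_j}$, the $j$-th summand of $T_i(h)$ is $-\frac{m}{n}\cdot\frac{s_{ij}(h)-h}{x_i-x_j}$, so the defect numerator $(f'-f)(g'-g)=(f-s_{ij}f)(g-s_{ij}g)$ that you computed gets multiplied by the overall scalar $-\frac{m}{n}$, giving
$$D_i(fg)-D_i(f)g-D_i(g)f=-\frac{m}{n}\sum_{j\neq i}\frac{(f-s_{ij}f)(g-s_{ij}g)}{x_i-x_j},$$
with a minus sign where the lemma asserts a plus. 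A sanity check with $n=2$, $f=g=x_1$, $c=m/2$: one finds $D_1(x_1)=1+c$ and $D_1(x_1^2)=2x_1+c(x_1+x_2)$, so the defect is $-c(x_1-x_2)$, while $\frac{(x_1-s_{12}x_1)^2}{x_1-x_2}=x_1-x_2$; this confirms the minus sign. So either the printed lemma carries a sign typo relative to the paper's own definition of $D_i$, or your conclusion is off by a sign --- in either case you should not assert agreement ``which is the content of the lemma'' without doing this check, since it is the only nontrivial content remaining after your (correct) expansion. The discrepancy is harmless for the paper's purposes: the lemma is only invoked through the corollary that follows it, where the correction term vanishes identically because $g-s_{ij}(g)=0$ for symmetric $g$.
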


\begin{corollary}
\label{symleib}
If $g$ is a symmetric polynomial then
$$D_{i}(fg)=D_i(f)g+D_i(g)f.$$
\end{corollary}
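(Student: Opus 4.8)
The plan is to read this off directly from the product rule stated in the immediately preceding lemma, specializing one of the two factors to a symmetric polynomial. The product rule asserts that
$$D_{i}(fg)=D_i(f)g+D_i(g)f+{m\over n}\sum_{i\neq j}{(f-s_{ij}(f))(g-s_{ij}(g))\over x_i-x_j},$$
so the only task is to argue that the correction term (the sum) vanishes identically once $g$ is assumed $S_n$-symmetric.

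First I would recall that the transpositions $s_{ij}$ are exactly the reflections appearing in the sum, and that a polynomial $g$ being symmetric means precisely that $\sigma(g)=g$ for every $\sigma\in S_n$. In particular $s_{ij}(g)=g$ for each pair $i\neq j$, since each $s_{ij}$ is itself an element of $S_n$. Substituting this into the numerator of each summand, the factor $(g-s_{ij}(g))$ becomes $0$, so every individual term of the sum is zero regardless of the values of $f$, $s_{ij}(f)$, or the denominator $x_i-x_j$. Hence the entire correction sum is identically zero, and the product rule collapses to the ordinary Leibniz formula $D_{i}(fg)=D_i(f)g+D_i(g)f$.

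There is essentially no obstacle here: the content is entirely contained in the preceding lemma, and the corollary is obtained by the single observation that the non-Leibniz correction term is built from differences $g-s_{ij}(g)$ which annihilate symmetric functions. The only point worth stating carefully, to avoid any appearance of a division issue, is that the vanishing occurs at the level of the numerator before any cancellation against $x_i-x_j$ is contemplated, so no regularity or limiting argument is needed; each summand is literally $0$ as a rational expression.
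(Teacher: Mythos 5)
Your argument is correct and is exactly the intended one: the paper states this as an immediate corollary of the preceding product-rule lemma, with the correction term vanishing because $s_{ij}(g)=g$ for symmetric $g$. Your added remark that the vanishing happens in the numerator, so no division issue arises, is a reasonable touch but not needed.
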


\begin{lemma}(\cite{dunkl2})
Suppose $f_1,\ldots,f_m$ are polynomials satisfying $(ij)f_{l}=f_{l}$ if $l\notin \{i,j\}.$
Then
\begin{equation}
\label{dunklprod}
D_{i}(f_1\cdots f_m)=\sum_{l=1}^{m}(D_{i}f_{l})\prod_{s\neq l}f_{s}+{m\over n}\sum_{l\neq i}{(f_i-s_{il}(f_i))(f_l-s_{il}(f_{l}))\over x_i-x_l}\prod_{s\neq l,i}f_{s}.
\end{equation}
\end{lemma}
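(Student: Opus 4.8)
The plan is to argue by induction on the number of factors, using the two-variable product rule of the preceding lemma as both the base case and the inductive engine, while keeping the Dunkl parameter $m/n$ fixed throughout. For a single factor the identity degenerates to $D_i(f_1)=D_i(f_1)$ with an empty cross sum, and for two factors it is exactly the previous lemma, once one observes that the standing hypothesis ``$(ij)f_l=f_l$ if $l\notin\{i,j\}$'' says precisely that each transposition acts nontrivially only on the two factors carrying the swapped indices.

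For the inductive step I would set $P=f_1\cdots f_{m-1}$ and apply the two-factor rule to the product $P\cdot f_m$:
$$D_i(Pf_m)=D_i(P)\,f_m+D_i(f_m)\,P+\frac{m}{n}\sum_{j\neq i}\frac{(P-s_{ij}P)(f_m-s_{ij}f_m)}{x_i-x_j},$$
and then substitute the inductive hypothesis for $D_i(P)$. The first two terms immediately assemble the full derivation sum $\sum_{l=1}^{m}(D_if_l)\prod_{s\neq l}f_s$: the contribution $D_i(P)f_m$ supplies the summands $1\le l\le m-1$, each regaining the missing factor $f_m$, while $D_i(f_m)P$ supplies the summand $l=m$.

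The heart of the matter is the new cross sum, and here the hypothesis does the essential work. Since $s_{ij}$ fixes $f_m$ whenever $m\notin\{i,j\}$, the factor $f_m-s_{ij}f_m$ vanishes for every $j\neq m$, so only the single term $j=m$ survives. For that term I expand $P-s_{im}P$; because $s_{im}$ fixes every $f_l$ with $l\notin\{i,m\}$ and $f_m$ does not occur in $P$, the antisymmetrization factors cleanly as
$$P-s_{im}P=(f_i-s_{im}f_i)\prod_{\substack{s\neq i\\ s\le m-1}}f_s,$$
which converts the surviving cross term into exactly $\frac{m}{n}\,\frac{(f_i-s_{im}f_i)(f_m-s_{im}f_m)}{x_i-x_m}\prod_{s\neq m,i}f_s$, the $l=m$ summand of the target cross sum. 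Combined with the cross terms from $D_i(P)f_m$ (the inductive hypothesis for $1\le l\le m-1$, each acquiring the extra factor $f_m$), this reconstitutes the complete sum $\frac{m}{n}\sum_{l\neq i}(\cdots)$ over all $l\neq i$, closing the induction.

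I expect the only delicate point to be the bookkeeping of which transpositions entangle which factors: the whole identity rests on the single observation that, under the hypothesis, $s_{ij}$ couples only $f_i$ and $f_j$, which is what simultaneously forces the collapse of the sum to $j=m$ and the factorization of $P-s_{im}P$. Once this is granted, the remaining steps are purely formal reindexing of the products, and no genuinely new estimate or construction is needed.
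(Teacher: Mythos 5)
The paper itself offers no proof of this lemma --- it is imported verbatim from \cite{dunkl2} --- so there is nothing internal to compare against. Your induction on the number of factors, driven by the two-factor product rule of the preceding lemma, is the natural argument, and most of it is correct: the derivation terms assemble exactly as you describe, and the hypothesis $(ij)f_l=f_l$ for $l\notin\{i,j\}$ is indeed what collapses the new cross sum and factors $P-s_{im}P$.

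There is, however, one case your inductive step does not cover: $i=m$, i.e.\ when the Dunkl index coincides with the index of the factor you peel off. Your pivotal claim that ``$f_m-s_{ij}f_m$ vanishes for every $j\neq m$'' invokes the hypothesis with $l=m$, which requires $m\notin\{i,j\}$; when $i=m$ this fails, the sum over $j$ does not collapse to a single term, and moreover the inductive hypothesis cannot even be quoted for $D_i(P)$ in the stated form, since the right-hand side of (\ref{dunklprod}) refers to $f_i=f_m$, which is not among the factors of $P=f_1\cdots f_{m-1}$. (In that case $D_i(P)$ is in fact a pure derivation --- each $s_{ij}$ moves at most one factor of $P$, so all cross terms in the iterated product rule vanish --- and the entire cross sum of (\ref{dunklprod}) is then produced by the final application of the two-factor rule applied to $P\cdot f_m$; the identity still holds, but by a different mechanism than the one you describe.) The cleanest repair is to use commutativity of the product to peel off a factor whose index differs from $i$, which is always possible when there are at least two factors, the one-factor case being your base case. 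With that one sentence added, your argument closes.
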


\begin{lemma}
\label{dunkldet}
Suppose that the functions $a_j, 1\le j\le k$ are symmetric with respect to all variables but $x_1$.
Consider a matrix $M=(s_{1i}(a_j))_{j=1}^{k}$.
Then 
\begin{equation}
D_{i}\det(M)=\sum_{l}\det(M_{i,l}),
\end{equation}
where $M_{s,l}$ denotes the matrix $M$ where the entries in the $l$'th row are replaced by their images under $D_i$.
\end{lemma}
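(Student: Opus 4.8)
The plan is to split the Dunkl operator into its differential and reflection parts, $D_i=\partial_i-\frac{m}{n}\sum_{j\neq i}\frac{s_{ij}-1}{x_i-x_j}$, and to match each part separately against the row-wise expansion on the right. Writing $\det M=\sum_\sigma\operatorname{sgn}(\sigma)\prod_l M_{l,\sigma(l)}$ with $M_{l,\alpha}=s_{1\alpha}(a_l)$, the differential part is immediate: since $\partial_i$ is an ordinary derivation, multilinearity of the determinant gives $\partial_i\det M=\sum_l\det(\partial_i^{(l)}M)$, where $\partial_i^{(l)}M$ denotes $M$ with the entries of row $l$ differentiated by $\partial_i$. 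Hence the whole content is the reflection identity, which I would prove for each fixed $j\neq i$ in the form
$$\frac{(1-s_{ij})\det M}{x_i-x_j}=\sum_l\det\big(R^{(l,j)}M\big),$$
where $R^{(l,j)}M$ is $M$ with row $l$ replaced by the vector with entries $\frac{(1-s_{ij})M_{l,\alpha}}{x_i-x_j}$. Summing over $j$ and recombining with the differential part then yields the asserted formula.

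The key structural input, which I would establish first, is the action of a transposition on the entries. Because $a_l$ is symmetric in all variables but $x_1$, it is fixed by $s_{ij}$ whenever $1\notin\{i,j\}$; tracking the distinguished variable then gives $s_{ij}s_{1i}(a_l)=s_{1j}(a_l)$ and, symmetrically, $s_{ij}s_{1j}(a_l)=s_{1i}(a_l)$, while $s_{ij}$ fixes $s_{1\alpha}(a_l)$ for every $\alpha\notin\{i,j\}$ (the cases in which $1\in\{i,j\}$ are checked directly using $s_{11}=\mathrm{id}$). In other words $s_{ij}(M_{l,\alpha})=M_{l,s_{ij}(\alpha)}$: the transposition $s_{ij}$ simply interchanges the two columns indexed by $i$ and $j$ and fixes all the others. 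As an immediate consequence, when $j$ is one of the column indices one gets $s_{ij}\det M=-\det M$, so $\det M$ is antisymmetric in $x_i,x_j$ and is therefore divisible by $x_i-x_j$.

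With this in hand the reflection identity follows by Laplace expansion. The entry $\frac{(1-s_{ij})M_{l,\alpha}}{x_i-x_j}$ of the replaced row vanishes for every column $\alpha\notin\{i,j\}$, so $R^{(l,j)}M$ differs from $M$ only in the row-$l$ entries lying in columns $i$ and $j$; expanding along that row reduces each term to the signed cofactors of $M$ in positions $(l,i)$ and $(l,j)$, and re-summing over $l$ reassembles genuine determinants of $M$ with one column replaced. If $j$ is not a column index, only the column-$i$ entry survives and the sum collapses to the determinant of $M$ with its $i$-th column replaced by $\big(\frac{s_{1i}(a_l)-s_{1j}(a_l)}{x_i-x_j}\big)_l$; the same multilinear manipulation on the left side (where $s_{ij}$ moves only column $i$) shows this equals $\frac{(1-s_{ij})\det M}{x_i-x_j}$. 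If $j$ is a column index, the replaced row carries opposite entries $c_l=\frac{M_{l,i}-M_{l,j}}{x_i-x_j}$ in columns $i$ and $j$, the two cofactor contributions assemble into $\det(\mathrm{col}\,i\!\to\! c)-\det(\mathrm{col}\,j\!\to\! c)$ (determinants of $M$ with the indicated column replaced by the vector $c=(c_l)_l$), and substituting $c=\frac{M_{\bullet,i}-M_{\bullet,j}}{x_i-x_j}$ and discarding the determinants with two equal columns gives $\frac{2\det M}{x_i-x_j}$, matching the left side since $s_{ij}\det M=-\det M$.

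I expect the main obstacle to be precisely the case in which $j$ is a column index: there the naive reflection term $\frac{(1-s_{ij})\det M}{x_i-x_j}$ appears singular, and the cancellation of the apparent pole is exactly what the column-interchange property guarantees, through the antisymmetry $s_{ij}\det M=-\det M$ and the vanishing of determinants with two equal columns. Once the structural identity $s_{ij}(M_{l,\alpha})=M_{l,s_{ij}(\alpha)}$ is nailed down, everything else is routine multilinear algebra, so I would invest the bulk of the effort in that identity and in the two-column cofactor bookkeeping it feeds.
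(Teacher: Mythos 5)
Your proof is correct, but it takes a genuinely different route from the paper's. The paper expands $\det M$ as a sum over permutations and applies its product rule for Dunkl operators (equation (\ref{dunklprod})) to each monomial; the resulting ``correction terms'', indexed by pairs $(\sigma,l)$ with $l\neq i$, are then shown to cancel pairwise, the terms for $(\sigma,l)$ and $((il)\sigma,l)$ having equal value but opposite sign. You instead split the operator itself, $D_i=\partial_i+\tfrac{m}{n}\sum_{j\neq i}\tfrac{1-s_{ij}}{x_i-x_j}$, dispose of $\partial_i$ by multilinearity, and prove that each divided-difference summand separately obeys the row-wise Leibniz rule on $\det M$ by column operations and cofactor expansion. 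Both arguments rest on the same structural identity --- in the paper's notation $s_{il}s_{1i}(a)=s_{1l}(a)$, in yours $s_{ij}(M_{l,\alpha})=M_{l,s_{ij}(\alpha)}$ --- so the underlying mechanism is identical, but your version avoids invoking the product-rule lemma and makes the cancellation more transparent: the reflections permute the columns of $M$, so the non-derivation part of $D_i$ contributes signed determinants that reassemble into the claimed row-wise sum, while the paper's version is shorter given that the product rule is already in hand. One minor bookkeeping remark: your case analysis is on whether $j$ is a column index and tacitly assumes $i$ is one (e.g.\ in asserting $s_{ij}\det M=-\det M$); you should also record the symmetric subcases where $i$ is not a column index, in which the surviving entries of the replaced row sit in column $j$, or both sides degenerate to $0$. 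These follow by the identical column manipulations, so this is a matter of completeness rather than a gap.
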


\begin{remark}
This lemma shows that although $D_{i}$ is not a first order differential operator, it acts on these determinants as a first order differential operator would act.
\end{remark}

\begin{proof}
Let us expand $\det(M)$ and apply the equation (\ref{dunklprod}). We have to show that the "correction terms" with divided differences will cancel out.
These terms are labelled by the pairs $(\sigma,l)$ where $l\neq i$ and $\sigma\in S_k$, and the terms corresponding to $(\sigma,l)$ and $((il)\sigma,l)$ have
opposite sign but same value
$${1\over x_i-x_l}(s_{1i}a_{\sigma(i)}-s_{il}s_{1i}a_{\sigma(i)})(s_{1l}a_{\sigma(l)}-s_{il}s_{1l}a_{\sigma(l)}))=$$ $${1\over x_i-x_l}(s_{1i}a_{\sigma(i)}-s_{1l}a_{\sigma(i)})(s_{1l}a_{\sigma(l)}-s_{1i}a_{\sigma(l)})).$$
\end{proof}

We are ready to describe the action of Dunkl operators on the image of the map $\lambda$. By Lemma \ref{symleib} 
and it is sufficient to compute the action of $D_i$ on the components of the differential form $dv_{\alpha_1}\wedge\ldots\wedge dv_{\alpha_k}$.

\begin{theorem}
Suppose that $\beta_1<\ldots<\beta_j$. If $\beta_1>1$, then
$$D_1\left|{\partial v_{\alpha_i}\over \partial x_{\beta_j}}\right|=0.$$
If $\beta_1=1$, then
$$D_1\left|{\partial v_{\alpha_i}\over \partial x_{\beta_j}}\right|=\left|\begin{matrix} 
(\alpha_1-1-m){\partial v_{\alpha_1-1}\over \partial x_{1}} & \ldots & (\alpha_k-1-m){\partial v_{\alpha_k-1}\over \partial x_{1}}\\
{\partial v_{\alpha_1}\over \partial x_{\beta_2}} & \ldots & {\partial v_{\alpha_k}\over \partial x_{\beta_2}} \\
\vdots & \ddots & \vdots\\
{\partial v_{\alpha_1}\over \partial x_{\beta_k}} & \ldots & {\partial v_{\alpha_k}\over \partial x_{\beta_k}} \\
\end{matrix}\right|$$
\end{theorem}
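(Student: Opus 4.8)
The plan is to compute $D_1$ acting on the determinant $\det\bigl(\partial v_{\alpha_i}/\partial x_{\beta_j}\bigr)$ by treating it as a determinant of the type handled in Lemma~\ref{dunkldet}, then applying the explicit formula of Lemma~\ref{Dv} to each entry. First I would observe that each entry $\partial v_{\alpha_i}/\partial x_{\beta_j}$ is, up to the scalar $-\tfrac{m}{n}$, equal to $\mathrm{Coef}_{\alpha_i-1}[B_{\beta_j}(z)]$, and that the functions $a_i := \partial v_{\alpha_i}/\partial x_1$ are symmetric in all variables except $x_1$ (since $v_{\alpha_i}$ is symmetric and differentiation in $x_1$ breaks only the $x_1$-symmetry). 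The rows of the matrix are then precisely of the form $s_{1\beta_j}(a_i)$, so the matrix matches the hypothesis of Lemma~\ref{dunkldet} with the roles of the index $1$ and the index set $\{\beta_j\}$ suitably identified.

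For the case $\beta_1>1$, the key point is that $D_1$ acts trivially: every row of the determinant involves $\partial/\partial x_{\beta_j}$ with $\beta_j\neq 1$, and by Lemma~\ref{Dv} the operator $D_s$ applied to $\lambda_i(dv_k)$ carries a Kronecker delta $\delta_{is}$ that vanishes unless the differentiation index equals $s$. Applying Lemma~\ref{dunkldet}, $D_1\det(M)=\sum_l \det(M_{1,l})$, where in each summand one row is replaced by its $D_1$-image; but each such row, having all $\beta_j\neq 1$, is annihilated, so every determinant in the sum has a zero row and the total vanishes. This gives $D_1\det=0$ as claimed.

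For the case $\beta_1=1$, only the first row (the one with $\partial/\partial x_1$) survives under $D_1$, for the same reason: in the sum $\sum_l\det(M_{1,l})$ from Lemma~\ref{dunkldet}, every term replacing a row $l\geq 2$ produces a zero row by Lemma~\ref{Dv}, while the $l=1$ term replaces the top row by its $D_1$-image. By Lemma~\ref{Dv}, $D_1\lambda_1(dv_{\alpha_i})=(\alpha_i-1-m)\,\lambda_1(dv_{\alpha_i-1})$, i.e.\ the $i$-th entry of the top row becomes $(\alpha_i-1-m)\,\partial v_{\alpha_i-1}/\partial x_1$, which is exactly the first row displayed in the theorem, with the remaining rows unchanged. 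The main obstacle I anticipate is verifying carefully that Lemma~\ref{dunkldet} applies, namely checking that $D_1$ genuinely behaves as a first-order derivation on this determinant despite not being a first-order operator; this is the content of the cancellation of the divided-difference ``correction terms'' in Lemma~\ref{dunkldet}, and I would want to confirm that the symmetry hypothesis ($a_i$ symmetric in all variables but $x_1$) is met so that the correction terms cancel in pairs as in that proof. Once that structural point is secured, the computation reduces to the entrywise formula of Lemma~\ref{Dv} and is routine.
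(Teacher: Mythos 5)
Your proposal is correct and follows exactly the route the paper intends: the paper's proof of this theorem is the single line ``Follows from Lemma \ref{dunkldet} and Lemma \ref{Dv},'' and you have filled in precisely those details --- identifying the rows as $s_{1\beta_j}$-images of functions symmetric in all variables but $x_1$ so that Lemma \ref{dunkldet} gives the derivation rule, then applying the entrywise formula of Lemma \ref{Dv} with its Kronecker delta to kill all rows with $\beta_j\neq 1$ and transform the $\beta_1=1$ row as displayed. Your flagged concern about the cancellation of correction terms is exactly the content already established in the proof of Lemma \ref{dunkldet}, so nothing further is needed.
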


\begin{proof}
Follows from Lemma \ref{dunkldet} and Lemma \ref{Dv}.
\end{proof}

\begin{definition}(\cite{op},\cite{etingof})
The quantum Olshanetsky-Perelomov Hamiltonians are defined as
$$H_{k}=\sum_{s=1}^{n}D_{s}^{k}.$$
\end{definition}

\begin{lemma}
\label{sumlam}
$$\sum_{i}\lambda_{i}(dv_k)=(k-1-m)v_{k-1}$$
\end{lemma}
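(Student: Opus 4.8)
The plan is to reduce the sum to a single generating-function identity and then read off the coefficient of $z^{k-1}$. Recall from the proof of Lemma \ref{Dv} that
$$\lambda_i(dv_k)={\partial v_k\over \partial x_i}=-{m\over n}\mbox{\rm Coef}_{k-1}[B_i(z)],$$
so that
$$\sum_i \lambda_i(dv_k)=-{m\over n}\mbox{\rm Coef}_{k-1}\left[\sum_i B_i(z)\right].$$
Everything therefore comes down to evaluating the symmetric generating function $\sum_i B_i(z)=F(z)\sum_i (1-zx_i)^{-1}$ in closed form.

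First I would compute $\sum_i (1-zx_i)^{-1}$. Writing $(1-zx_i)^{-1}=1+zx_i(1-zx_i)^{-1}$ and using Lemma \ref{newton} in the form $\sum_i x_i(1-zx_i)^{-1}=-{d\over dz}\ln U(z)$, one gets
$$\sum_i {1\over 1-zx_i}=n-z{d\over dz}\ln U(z).$$
Since $F(z)=U(z)^{m/n}$ gives ${d\over dz}\ln U(z)={n\over m}\,F'(z)/F(z)$, multiplying through by $F(z)$ yields the clean identity
$$\sum_i B_i(z)=n\,F(z)-{n\over m}\,z\,F'(z).$$
Extracting the coefficient of $z^{k-1}$, and using $\mbox{\rm Coef}_{k-1}[zF'(z)]=(k-1)v_{k-1}$ together with $\mbox{\rm Coef}_{k-1}[F(z)]=v_{k-1}$, gives $\mbox{\rm Coef}_{k-1}[\sum_i B_i(z)]=\bigl(n-{n\over m}(k-1)\bigr)v_{k-1}$. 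Multiplying by $-{m\over n}$ produces $(k-1-m)v_{k-1}$, which is exactly the asserted formula.

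An alternative route, closer in spirit to the proof of Theorem \ref{dahalow}, avoids the logarithmic derivative altogether: expanding $\mbox{\rm Coef}_{k-1}[B_i(z)]=\sum_{a=0}^{k-1}x_i^a v_{k-1-a}$ and summing over $i$ gives $\sum_i \lambda_i(dv_k)=-{m\over n}\sum_{a=0}^{k-1}p_a v_{k-1-a}$ with $p_0=n$, and then the recursion (\ref{recursion}), specialized to the index $k-2$, identifies $\sum_{a=1}^{k-1}p_a v_{k-1-a}=-{n\over m}(k-1)v_{k-1}$, so the same cancellation follows. The only point that requires care is the bookkeeping of the constant term $p_0=n$, which is the source of the $-m$ summand; I do not expect a genuine obstacle, since the remainder is formal manipulation of the coefficient formulas already established for $F(z)$.
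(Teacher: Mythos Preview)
Your argument is correct and is essentially the paper's own proof: both compute the generating function $\sum_i\partial F/\partial x_i$ (equivalently $-{m\over n}z\sum_i B_i(z)$) by splitting $\sum_i(1-zx_i)^{-1}=n+z\sum_i x_i(1-zx_i)^{-1}$ and converting the logarithmic derivative of $U$ into $F'/F$, then extract the coefficient. Your alternative via the recursion (\ref{recursion}) is also fine and leads to the same cancellation.
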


\begin{proof}
$$\sum_{i}\lambda_{i}(dv_k)=\sum_{i}{\partial F(z)\over \partial x_s}=-{m\over n}zF(z)\sum_{i}{1\over 1-zx_i}=$$ $$-mzF(z)-{m\over n}z^{2}F(z)\sum_{i}{x_i\over 1-zx_i}=-mzF(z)+z^2{dF(z)\over dz}.$$
\end{proof}

\begin{theorem}
\label{Huv}
The action of Hamiltonians on $u_l$ and $v_l$ has the following form:
\begin{eqnarray}
\label{Hk}
H_{k}(v_l)=(l-1-m)\cdots (l-k+m)v_{l-k},\nonumber \\ H_{k}(u_l)=({m\over n})^{k-1}(l-1-n)\cdots(l-k-n)u_{l-k}
\end{eqnarray}
\end{theorem}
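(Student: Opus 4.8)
The plan is to prove both formulas by one common two‑step template, and then to reduce the $u$‑case to a single new Dunkl computation that mirrors the $v$‑case. The template is: since $v_l$ and $u_l$ are symmetric, all divided‑difference corrections in $D_s$ vanish and $D_s$ acts on them simply as $\partial/\partial x_s$, so $D_sv_l=\lambda_s(dv_l)$ and $D_su_l=\lambda_s(du_l)$. Applying further copies of $D_s$ lands us in the non‑symmetric quantities $\lambda_s(dv_k)$ (resp. $\lambda_s(du_k)$), which satisfy a first‑order ``lowering'' relation; iterating this relation $k-1$ more times computes $D_s^k$, and a final summation over $s$ (a ``trace'' identity) turns the non‑symmetric output back into the symmetric $v_{l-k}$ (resp. $u_{l-k}$). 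Thus the whole theorem reduces to producing, for each of $u$ and $v$, a lowering relation and a trace identity.

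For $v_l$ both ingredients are already in hand. First I would iterate Lemma \ref{Dv}, namely $D_s\lambda_s(dv_k)=(k-1-m)\lambda_s(dv_{k-1})$, to obtain by induction
$$D_s^k v_l=(l-1-m)(l-2-m)\cdots(l-k+1-m)\,\lambda_s(dv_{l-k+1}).$$
Summing over $s$ and invoking Lemma \ref{sumlam}, $\sum_s\lambda_s(dv_p)=(p-1-m)v_{p-1}$, with $p=l-k+1$ supplies the final factor $(l-k-m)$, giving $H_k(v_l)=(l-1-m)\cdots(l-k-m)v_{l-k}$. (This computation indicates that the last factor in the stated formula should read $(l-k-m)$.)

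The real work is the $u$‑case, since neither Lemma \ref{Dv} nor Lemma \ref{sumlam} applies: both are tied to $F=U^{m/n}$ rather than to $U$ itself. I would establish the two missing ingredients directly from $G_s(z)=U(z)/(1-zx_s)=\prod_{j\neq s}(1-zx_j)$, whose crucial feature is that it is independent of $x_s$. Then $\partial_s G_s=0$, so only the divided‑difference part of $D_s$ survives; using $s_{sj}G_s=G_j$ together with $\tfrac{G_s-G_j}{x_s-x_j}=z\prod_{l\neq s,j}(1-zx_l)$ the sum telescopes to
$$D_s G_s=\frac{m}{n}\left(z^2 G_s'-(n-1)z\,G_s\right),$$
where $'=d/dz$. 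Extracting the coefficient of $z^{k-1}$ and using $\lambda_s(du_k)=-\mathrm{Coef}_{k-1}[G_s]$ yields the lowering relation $D_s\lambda_s(du_k)=\tfrac{m}{n}(k-1-n)\lambda_s(du_{k-1})$, whose $k-1$ iterations already account for the prefactor $(m/n)^{k-1}$. The matching trace identity $\sum_s\lambda_s(du_p)=(p-1-n)u_{p-1}$ I would obtain the same way from $\sum_s G_s=nU-zU'$. Combining exactly as in the $v$‑case gives $H_k(u_l)=(m/n)^{k-1}(l-1-n)\cdots(l-k-n)u_{l-k}$.

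The main obstacle is therefore the explicit Dunkl computation of $D_sG_s$ and of $\sum_s G_s$; everything else is bookkeeping of the induction and the final sum. What makes this step tractable, rather than an unwieldy tangle of divided differences, is precisely that $G_s$ omits the factor $(1-zx_s)$, so the partial‑derivative term drops out, the symmetrization $s_{sj}$ merely permutes the $G$'s, and the divided differences collapse into $z^2G_s'-(n-1)zG_s$. I expect the only delicate point to be matching indices when passing between $\mathrm{Coef}_k[G_s]$, $G_s'$, and $\lambda_s(du_k)$, which is a routine but error‑prone shift of one or two in the degree.
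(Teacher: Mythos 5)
Your proposal is essentially the paper's own proof: the paper treats $u_l$ by exactly your two-step template (a one-step lowering identity $D_i^2(u_l)=\frac{m}{n}(l-1-n)\frac{\partial u_{l-1}}{\partial x_i}$ asserted with ``one can check'', plus the trace identity $\sum_i\lambda_i(du_l)=(l-1-n)u_{l-1}$, followed by iteration and a final summation), and it disposes of $v_l$ by citing Lemmas \ref{Dv} and \ref{sumlam} just as you do. Your derivation of the $u$-case ingredients from $G_s=U/(1-zx_s)$ is the natural way to fill in the omitted check, and you are right that the last factor of the displayed $v$-formula should read $(l-k-m)$. One caution: with the Dunkl operator as literally defined in the paper I find $D_sG_s=\frac{m}{n}\left((n-1)zG_s-z^2G_s'\right)$, the negative of your expression (for instance, for $n=3$ one computes directly $D_1(-x_2x_3)=+\frac{m}{3}(x_2+x_3)$, i.e.\ $D_1\lambda_1(du_3)=+\frac{m}{3}\lambda_1(du_2)$, whereas your lowering relation predicts the opposite sign), so each iteration contributes $\frac{m}{n}(n+1-k)$ rather than $\frac{m}{n}(k-1-n)$ and the final prefactor becomes $(-m/n)^{k-1}$; since the identical sign question already afflicts the paper's own asserted $D_i^2(u_l)$ identity, this is a matter of pinning down sign conventions rather than a gap in your method.
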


\begin{proof}
Since $u_l$ is symmetric,
$D_{i}(u_{l})={\partial u_{l}\over \partial x_{i}},$
and one can check that 
$$D_{i}^2(u_{l})={m\over n}(l-1-n){\partial u_{l-1}\over \partial x_{i}},\quad \sum_{i}\lambda_i(du_l)=(l-1-n)u_{l-1}$$
hence
$$H_{k}(u_l)=({m\over n})^{k-1}(l-1-n)\cdots(l-k+1-n)\sum_{i}{\partial u_{l-k+1}\over \partial x_{i}}=$$ $$
({m\over n})^{k-1}(l-1-n)\cdots(l-k+1-n)(l-k-n)u_{l-k}.$$

The proof for $v_l$ is similar -- it follows from  Lemma \ref{Dv} and Lemma \ref{sumlam}.
\end{proof}

\bigskip

Department of Mathematics, Stony Brook University.

100 Nicolls Road, Stony Brook NY 11733.

{\it E-mail address:} {\tt egorsky@math.sunysb.edu}

\end{document}